\newtheorem{thm}{Theorem}[section]
\newtheorem{lem}{Lemma}[section]
\newtheorem{prop}{Proposition}[section]
\newcommand{\be}{\begin{equation}}
\newcommand{\ee}{\end{equation}}
\newcommand{\bee}{\begin{equation*}}
\newcommand{\eee}{\end{equation*}}
\newcommand{\bea}{\begin{eqnarray}}
\newcommand{\eea}{\end{eqnarray}}
\newcommand{\bess}{\begin{eqnarray*}}
\newcommand{\eess}{\end{eqnarray*}}
\title{\bf Positive Solutions of Competition Model \\with Saturation\footnote{This work was supported by NSFC Grant 11771110}}
\author{\bf 
	Aung Zaw \uppercase {Myint}$^1$$^2$\footnote{Corresponding Author. E-mail: aungzawmyint@hit.edu.cn}
	Li \uppercase {Li}$^1$ 
	Mingxin \uppercase {Wang}$^1$
}
\date{}
\begin{document}

\maketitle

\vspace{-.5cm}
\noindent
$^1$School of Mathematics, Harbin Institute of Technology, Harbin 150006, PRC\\
$^2$Department of Mathematics, University of Mandalay, Mandalay 05032, Myanmar

\vspace{1.5cm}

{\small
\noindent
{\bf Abstract} \quad 
In this paper, the positive solutions of a diffusive competition model with saturation are mainly discussed. Under certain conditions, the stability and multiplicities of coexistence states are analyzed. And by using the topological degree theory in cones, it is proved that the problem has at least two positive solutions under certain conditions. Finally, investigating the bifurcation of coexistence states emanating from the semi-trivial solutions, some instability and multiplicity results of coexistence state are expressed.

\noindent
{\bf Keywords}{\quad competition model; positive solutions; existence; multiplicity; bifurcation}        

\noindent
{\bf 2010 MR Subject Classification}{\quad 35J57, 35B09, 35B32, 35B35, 92B05}      

}

\section{Introduction}
{\setlength\arraycolsep{2pt}

In this paper, we consider positive solutions of the following diffusive competition
system with homogeneous Dirichlet boundary conditions
\begin{equation}\label{eq:model}
	\begin{cases}
		-\triangle u=u (a-u-b v f(u,v)),\ & x\in\Omega,\\
		-\triangle v=v(c-v-d u f(u,v)), & x\in\Omega,\\
		u=v=0, & x\in\partial\Omega,
	\end{cases}
\end{equation}
where $\Omega$ is a bounded open domain in $\mathbb{R}^N$, $N\geqslant 1$, the boundary $\partial\Omega$ is smooth.

The unknown functions $u$ and $v$ represent densities of two competitive species, respectively. Hence, we only concern about positive solutions of \eqref{eq:model}. The parameters $a, b, c, d$ are positive constants. The functional response
\begin{equation}\label{eq:response}
	f(u,v)=\frac{1}{(1+\alpha u) (1+\beta v)},
\end{equation}
which is proposed by Bazykin \cite{B} to describe the saturation in a predator-prey model. In this functional, $\alpha$ and $\beta$ are non-negative constants.

Regarding to the Bazykin functional response, the existence, multiplicity and uniqueness of positive solutions of the predator-prey model were studied in \cite{WQ, WWG}, where $f(u,v)$ is given by \eqref{eq:response} with positive $\alpha$ and $\beta$.  The predator-prey models with Bazykin functional response and other kinds of functional responses have been studied by many authors, please refer to \cite{LLY, LPW, LWD, NW, PangW, PW, PWC, RA, Z1} for example.

On the other hand, the research on competition model \eqref{eq:model} with Bazykin functional response are few. In \cite{Du3, DZ}, the competition model \eqref{eq:model} was studied for $\alpha=\beta=0$.
In this paper, we obtain existence, stability and multiplicities the positive solutions of problem \eqref{eq:model}. Bifurcations and multiplicities near the semi-trivial solutions are also obtained in the present paper.

The organization of the paper is as follows. In Section \ref{sec:pre}, we present some basic results, including the {\it a priori} estimates and some notations to apply the fixed point index theory. We calculate the fixed point index by using a well-known abstract result (Proposition \ref{prop:index}) in Section \ref{sec:index}. In Section \ref{sec:existence}, we use the topological degree and index theory to study the existence of positive solutions. Using the upper and lower solutions method and eigenvalue theory, we obtain the stability of positive solutions in Section \ref{sec:stab:mult}. Then combining with the topological degree theory in cones, we prove that problem \eqref{eq:model} has at least two positive solutions under suitable conditions. In Section \ref{sec:bif}, applying the topological degree theory and the bifurcation theory established by P.H. Rabinowitz, we investigate the bifurcation of positive solutions emanating from the semi-trivial solutions $(0,\theta_{c}, a_0)$ and $(\theta_{a}, 0, c_0)$, some instability and multiplicity results are also obtained.

\section{Preliminaries}\label{sec:pre}

In this section, we present some known results regarding to the {\it a priori} estimates, some concepts and propositions to apply the fixed point index theory.

Let $\lambda_1(q)$ be the principle eigenvalue of
\[
	\begin{cases}
		-\triangle \phi+q(x)\phi=\lambda\phi, \qquad  & x\in\Omega,\\
		\phi=0, & x\in\partial\Omega,
	\end{cases}
\]
and denote $\lambda_1:=\lambda_1(0)$.

It is well-known that when $a>\lambda_1$, the following problem
\[
	\begin{cases}
		-\triangle u=u (a-u), \qquad & x\in\Omega,\\
		u=0, & x\in\partial\Omega
	\end{cases}
\]
has a unique positive solution $\theta_{a}$. Similarly when $c>\lambda_1$, let $\theta_{c}$ be the unique positive solution of
\[
	\begin{cases}
		-\triangle v=v (c-v), \qquad  & x\in\Omega,\\
		v=0, & x\in\partial\Omega.
	\end{cases}
\]
It is obvious that all possible trivial and semi-trivial solutions of \eqref{eq:model} include $(0,0)$, $(\theta_{a}, 0)$ and $(0,\theta_{c})$. Moreover,
\[
	0 < \theta_{a} \leqslant a, \quad 0 < \theta_{c} \leqslant c, \qquad x\in \Omega,
\]
\[
	\frac{\partial\theta_{a}}{\partial\nu},\frac{\partial\theta_{c}}{\partial\nu}<0, \quad x\in \partial\Omega,
\]
where $\frac{\partial}{\partial \nu}$ is the outer normal derivative.
It can be proved that all non-negative solutions $(u,v)$ of \eqref{eq:model}, $u\not\equiv 0$, $v\not\equiv 0$ must satisfy $u>0$, $v>0$ in $\Omega$. Such positive solutions of \eqref{eq:model} are called {\it coexistence states}.

\begin{thm}[\textit{A priori} estimate]\label{thm:upper:bound}
	Any non-negative solution $(u, v)$ of \eqref{eq:model} satisfies
	\[
		u\leqslant a,\quad v\leqslant c.
	\]
\end{thm}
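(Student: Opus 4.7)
The plan is to obtain the two bounds by a straightforward maximum principle argument applied separately to each equation, exploiting the non-negativity of the saturation term.

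First I would observe that, since $\alpha,\beta\ge 0$ and $u,v\ge 0$, the functional response satisfies $f(u,v)=\frac{1}{(1+\alpha u)(1+\beta v)}\in(0,1]$; in particular the competition terms $bvf(u,v)$ and $duf(u,v)$ are non-negative. Consequently
\[
-\Delta u \;=\; u\bigl(a-u-bvf(u,v)\bigr)\;\le\; u(a-u),\qquad
-\Delta v \;=\; v\bigl(c-v-duf(u,v)\bigr)\;\le\; v(c-v)
\]
pointwise in $\Omega$, while $u=v=0$ on $\partial\Omega$.

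Next, to prove $u\le a$, I would argue by contradiction. If $u\not\equiv 0$, then by elliptic regularity $u\in C(\bar\Omega)$, so $u$ attains its maximum $M=\max_{\bar\Omega}u$ at some point $x_0$. Because $u=0$ on $\partial\Omega$, any maximum with $M>0$ lies in the interior, hence $-\Delta u(x_0)\ge 0$. Combined with the inequality above this forces
\[
0\;\le\;-\Delta u(x_0)\;\le\; u(x_0)\bigl(a-u(x_0)\bigr)\;=\; M(a-M),
\]
and therefore $M\le a$. (If instead $u\equiv 0$, the bound is trivial.) The same argument applied to the second equation yields $v\le c$.

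The proof is essentially routine; the only subtlety worth flagging is to justify that the maximum is attained in $\Omega$ rather than on $\partial\Omega$, which follows from the Dirichlet condition together with the assumed non-negativity of $u$ and $v$. No more refined tool (such as the strong maximum principle, Hopf's lemma, or a sub/supersolution construction) is needed for the $L^\infty$ bound itself, so I do not anticipate any real obstacle.
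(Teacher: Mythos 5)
Correct, and essentially the same approach as the paper: both proofs reduce to the differential inequality $-\Delta u \leqslant u(a-u)$ (using that $b v f(u,v)\geqslant 0$) and conclude by the maximum principle, and likewise for $v$. The only difference is cosmetic --- the paper phrases the final step as a comparison with the logistic solution, yielding the slightly sharper intermediate bound $u\leqslant\theta_{a}\leqslant a$, while you evaluate the inequality directly at an interior maximum point; both are valid, and your variant is if anything more self-contained.
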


\begin{proof}
	Since $u$ and $v$ satisfy
	\[
	\begin{cases}
		-\triangle u=u (a-u-b v f(u,v))\leqslant u(a-u),\\
		-\triangle v=v (c-v-d u f(u,v))\leqslant  v(c-v), 		
	\end{cases}	
	\]
	then $u\leqslant \theta_{a} \leqslant a$ and $v\leqslant \theta_{c} \leqslant c$ by applying the maximum principle.
\end{proof}

Now let us introduce some concepts to apply the fixed point index theory, which is essential to get the existence and multiplicity of positive solutions of \eqref{eq:model}.

Let $E$ be a Banach space. A non-empty set $W\subset E$ is called a {\it total wedge} if $W$ is a closed convex set, $\beta W\subseteq W$ for all $\beta\geqslant 0$ and  $E =\overline{W-W}$. For any $y\in W$, we define
\begin{equation}\label{eq:def:WySy}
\begin{aligned}
	W_y &= \{x\in E:\,\exists\ r=r(x)>0,\text{ s.t. } y+rx\in W\},   \\
	S_y&= \{ x\in\overline{W}_y:\, -x\in\overline{W}_y\}.
\end{aligned}
\end{equation}
Then $\overline{W}_y$ is a {\it wedge} containing $W, y, -y$, while $S_y$ is a closed subset of $E$ containing $y$.
A linear compact operator $T: W_y\to W_y$ is said to have \emph{property $\alpha$} on $W_y$ if there exist $t\in (0,1)$ and $w\in W_y\setminus S_y$, such that $(I-tT)w\in S_y$.

Denote $B_\delta^+ (y)=B_\delta(y)\cap W$ for any $y\in W$ and $\delta>0$. Suppose that $F: B_\delta^+ (y)\to W$ is a compact operator and $y$ is an isolated fixed point of $F$. Further assume that $F$ is Fr\'{e}chet differentiable at $y$, so the derivative $F'(y): W_y\to W_y$. We use ${\rm index}_W(F, y)$ to denote the fixed point index of $F$ at $y$ relative to $W$.

\begin{prop}{\rm(\cite{D1, LG, W})}\label{prop:index}
	Suppose that $I-F'(y)$ is invertible on $\overline{W_y}$.
	
	{\rm(i)} If $F'(y)$ has \textit{property $\alpha$}, then ${\rm index}_W(F,y)=0$.
	
	{\rm(ii)} If $F'(y)$ does not have \textit{property $\alpha$}, then
	\[
		{\rm index}_W (F, y)= (-1)^\sigma,
	\]
	where $\sigma$ is the sum of multiplicities of all eigenvalues of $F'(y)$ which are greater than one.
\end{prop}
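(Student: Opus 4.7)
The plan is to proceed in the classical style of Dancer and López-Gómez: first reduce to a linear computation, then handle each case via a homotopy argument adapted to property $\alpha$.

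The first step is linearisation. Because $F$ is Fréchet differentiable at $y$ with $I-L$ invertible on $\overline{W}_y$ (where $L:=F'(y)$), the remainder $r(x):=F(y+x)-y-Lx=o(\|x\|)$ is dominated by the spectral gap of $I-L$ on a sufficiently small wedge-ball $B_\delta^+(0)$. A straight-line homotopy between the translated map $x\mapsto F(y+x)-y$ and $L$ has no fixed point on $\partial B_\delta^+(0)$, so by homotopy invariance and excision of the wedge-relative index
\[
{\rm index}_W(F,y) \;=\; {\rm index}_{W_y}(L,0),
\]
which reduces the problem to computing a linear wedge-relative index at $0$.

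For part (ii), the key structural fact I would establish is that the failure of property $\alpha$ implies the following rigidity: for every $t\in(0,1)$ and every $u\in\overline{W}_y$, the condition $(I-tL)u\in S_y$ forces $u\in S_y$. This rigidity allows the linear homotopy $tL$, $t\in[0,1]$, to be deformed so that no fixed point escapes $S_y$ and no boundary fixed point appears on a small ball. The wedge-relative index of $L$ then collapses to the classical Leray-Schauder index of $L$ restricted to $S_y$, which by the standard Leray-Schauder formula equals $(-1)^\sigma$, with $\sigma$ the total algebraic multiplicity of the eigenvalues of $L$ strictly above $1$.

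For part (i), fix a witness $t_0\in(0,1)$, $w_0\in W_y\setminus S_y$ with $h:=(I-t_0L)w_0\in S_y$, and form the affine homotopy $H_s(x)=Lx+sw_0$ on $\overline{B}_\delta^+(0)$ for $s\in[0,s_*]$, choosing $s_*$ small enough to keep the candidate fixed point inside the ball. Any fixed point of $H_s$ in $\overline{W}_y$ must satisfy $(I-L)x=sw_0$, hence $x_s = s(I-L)^{-1}w_0$; using the witness decomposition $w_0=t_0Lw_0+h$ together with $w_0\notin S_y$ and the invertibility of $I-L$, one shows that $(I-L)^{-1}w_0\notin\overline{W}_y$, so $H_s$ is fixed-point-free on $\overline{W}_y\setminus\{0\}$ for every $s\in(0,s_*]$. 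Homotopy invariance then gives ${\rm index}_{W_y}(L,0)=0$.

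The hardest step will be part (ii), specifically the Krein-Rutman-type spectral statement that the failure of property $\alpha$ prevents any eigenvalue of $tL$, $t\in(0,1)$, from crossing $1$ along a direction in $\overline{W}_y\setminus S_y$. Verifying this spectral bookkeeping for a compact operator on a wedge (rather than on a genuine cone) and converting it into the reduction of the wedge-relative index to the Leray-Schauder index on $S_y$ is the technical core of the proof, and is where the assumption that $I-L$ is invertible on the entire wedge $\overline{W}_y$, not just on $S_y$, is used in an essential way.
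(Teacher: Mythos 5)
The paper does not prove this proposition at all: it is quoted, with references, from Dancer \cite{D1}, Li \cite{LG} and Wang \cite{W}, so there is no in-paper argument to compare yours with; what follows judges your reconstruction on its own. Your reduction to the linear index ${\rm index}_{W_y}(L,0)$ and your part (i) are essentially sound: $H_s(x)=Lx+sw_0$ is the standard device, and the key claim $(I-L)^{-1}w_0\notin\overline{W}_y$ is true --- but it is not automatic, and your ``one shows'' hides the actual work. It follows by passing to the quotient $E/S_y$: since $(I-t_0L)w_0\in S_y$, the class $q(w_0)$ is an eigenvector of the induced operator $\tilde L$ with eigenvalue $t_0^{-1}>1$, whence $q\bigl((I-L)^{-1}w_0\bigr)=(1-t_0^{-1})^{-1}q(w_0)$ is a \emph{negative} multiple of a nonzero element of the quotient cone $q(\overline{W}_y)$, which is a genuine cone because $S_y$ is the maximal subspace of $\overline{W}_y$. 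Without some version of this quotient step the non-membership claim is unsupported.

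The genuine gap is in part (ii). The homotopy $tL$, $t\in[0,1]$, is inadmissible in exactly the interesting case: if $L|_{S_y}$ has an eigenvalue $\lambda>1$ (i.e.\ $\sigma>0$, the case where the index must come out $-1$ rather than $+1$), then $tL$ with $t=1/\lambda\in(0,1)$ has a whole eigenspace of fixed points inside $S_y$, and that subspace meets the boundary of every ball around $0$; so ``no fixed point escapes $S_y$'' does not make the homotopy admissible, and the asserted collapse of the wedge index to the Leray--Schauder index of $L|_{S_y}$ does not follow from it. What is actually needed is an index-reduction theorem: either Dancer's original argument giving ${\rm index}_W(F,y)={\rm index}_E(F,y)$, or a product/commutativity formula for the fixed point index over a splitting $\overline{W}_y\cong S_y\times q(\overline{W}_y)$, combined with the Krein--Rutman fact that failure of property $\alpha$ forces $r(\tilde L)\leqslant 1$ --- which both makes the quotient factor contribute index $1$ and shows that the eigenvalues of $L$ exceeding one coincide, with multiplicity, with those of $L|_{S_y}$, a reconciliation your sketch silently assumes when it equates the index on $S_y$ with $(-1)^\sigma$ for $\sigma$ counting eigenvalues of $F'(y)$ on all of $E$. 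You correctly flag the spectral bookkeeping as delicate, but the missing ingredient is not the Krein--Rutman estimate; it is the degree-theoretic reduction itself, which the proposed homotopy cannot deliver.
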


\begin{prop}{\rm(\cite{D2, LG, Y})}\label{prop:spectrum}
	Let $q\in C(\overline{\Omega}),$ and $M$ be a sufficiently large number such that $M > q(x)$ for all $x\in\overline{\Omega}$.  Denote $L = (M-\triangle)^{-1} \big( M-q(x) \big)$
	
	and $r(L)$ be the spectral radius of $L$.  Then we have:
	
	{\rm(i)} $\lambda_1(q) > 0\Leftrightarrow r(L) < 1$.
	
	{\rm(ii)} $\lambda_1(q) < 0\Leftrightarrow r(L) >1$.
	
	{\rm (iii)} $\lambda_1(q)=0\Leftrightarrow r(L)=1$.
\end{prop}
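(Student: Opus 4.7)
The plan is to realise $L$ as the $\lambda=0$ member of a one-parameter family of operators $L_\lambda := (M-\triangle)^{-1}(M - q(x) + \lambda)$, convert the eigenvalue problem for $-\triangle + q$ into an eigenvalue problem for $L_\lambda$, and then conclude by combining the Krein--Rutman theorem with a strict monotonicity argument in $\lambda$.

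First I would rewrite the equation $-\triangle \phi + q(x)\phi = \lambda\phi$ with $\phi|_{\partial\Omega}=0$ by adding $M\phi$ on both sides and inverting $M-\triangle$ (legitimate since $(M-\triangle)^{-1}$ maps $C(\overline\Omega)$ into the functions with zero Dirichlet trace), obtaining the equivalent fixed-point equation $L_\lambda \phi = \phi$. Thus $\lambda$ is a Dirichlet eigenvalue of $-\triangle + q$ with eigenfunction $\phi$ if and only if $1$ is an eigenvalue of $L_\lambda$ with the same eigenfunction; in particular, the positive principal eigenfunction $\phi_1$ associated with $\lambda_1(q)$ satisfies $L_{\lambda_1(q)}\phi_1 = \phi_1$.

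Next I would apply the Krein--Rutman theorem to $L_\lambda$. Because $(M-\triangle)^{-1}$ is a strongly positive compact operator (by elliptic regularity together with the strong maximum principle) and $M - q(x) + \lambda > 0$ on $\overline\Omega$ for all $\lambda$ in a neighbourhood of $0$, the operator $L_\lambda$ is itself strongly positive and compact. Krein--Rutman then gives that its spectral radius $r(L_\lambda)$ is a simple eigenvalue with a strictly positive eigenfunction, and that no other eigenvalue admits a positive eigenfunction. Combined with the previous paragraph, this forces $r(L_{\lambda_1(q)}) = 1$.

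The decisive step, and the main obstacle I anticipate, is the strict monotonicity of $\lambda \mapsto r(L_\lambda)$. For $\lambda_2 > \lambda_1$ one has $L_{\lambda_2} - L_{\lambda_1} = (\lambda_2 - \lambda_1)(M-\triangle)^{-1}$, a strongly positive operator, so the positive principal eigenfunction $\phi_{\lambda_1}$ of $L_{\lambda_1}$ satisfies $L_{\lambda_2}\phi_{\lambda_1} > r(L_{\lambda_1})\phi_{\lambda_1}$ pointwise; a Collatz--Wielandt type inequality then yields $r(L_{\lambda_2}) > r(L_{\lambda_1})$. With this strict monotonicity and $r(L_{\lambda_1(q)})=1$ in hand, the three equivalences drop out simultaneously: $\lambda_1(q) > 0 \Leftrightarrow r(L_0) < r(L_{\lambda_1(q)}) = 1$, $\lambda_1(q) < 0 \Leftrightarrow r(L_0) > 1$, and $\lambda_1(q) = 0 \Leftrightarrow r(L_0) = 1$, which are exactly (i), (ii), (iii).
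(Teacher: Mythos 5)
The paper does not actually prove Proposition~\ref{prop:spectrum} --- it is quoted from the references --- so the only issue is whether your argument stands on its own. Your overall strategy (embedding $L$ into the family $L_\lambda=(M-\triangle)^{-1}(M-q+\lambda)$, identifying $\lambda_1(q)$ as the parameter value at which $1$ is the principal eigenvalue, and using strict monotonicity of $\lambda\mapsto r(L_\lambda)$) is a recognized route, and the fixed-point reformulation $L_\lambda\phi=\phi$ as well as the Collatz--Wielandt monotonicity step are correct \emph{where they apply}. The gap is exactly at the point where you apply Krein--Rutman to $L_{\lambda_1(q)}$. You justify strong positivity of $L_\lambda$ only by noting that $M-q(x)+\lambda>0$ for $\lambda$ in a neighbourhood of $0$, but the operator you need to be strongly positive is $L_{\lambda_1(q)}$, and nothing forces $\lambda_1(q)$ to lie in that neighbourhood: the hypothesis is only $M>\max_{\overline\Omega}q$, while $\lambda_1(q)$ can be arbitrarily negative. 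For instance, if $q=0$ on part of $\Omega$ and $q=-K$ on a subdomain, then $\max_{\overline\Omega}q=0$, so $M$ need only be moderately large, yet $\lambda_1(q)\to-\infty$ as $K\to\infty$; then $M-q+\lambda_1(q)=M+\lambda_1(q)<0$ on the set where $q=0$, the multiplier changes sign, $L_{\lambda_1(q)}$ is not a positive operator, and Krein--Rutman gives you neither $r(L_{\lambda_1(q)})=1$ nor the monotonicity of $r(L_\lambda)$ on $[\lambda_1(q),0]$. This breaks the implication $\lambda_1(q)<0\Rightarrow r(L)>1$, hence part (ii) and (by trichotomy) the reverse implications of (i) and (iii). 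Your argument is complete only under the additional assumption $M>\max_{\overline\Omega}q-\lambda_1(q)$, which is not what the statement asserts.

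The repair is short and worth making explicit: for $\lambda_1(q)\le 0$ argue directly on $L=L_0$, which \emph{is} strongly positive by hypothesis. Let $\phi_1>0$ be the principal eigenfunction of $-\triangle+q$. From $(M-\triangle)\phi_1=(M-q)\phi_1+\lambda_1(q)\phi_1$ one gets $L\phi_1=\phi_1-\lambda_1(q)(M-\triangle)^{-1}\phi_1\ge\phi_1$, with the difference lying in the interior of the cone when $\lambda_1(q)<0$; the same Collatz--Wielandt inequality you already invoke then gives $r(L)\ge 1$, respectively $r(L)>1$. Together with your (correct) treatment of the case $\lambda_1(q)\ge 0$ via the homotopy on $[0,\lambda_1(q)]$, where the multiplier $M-q+\lambda$ is automatically positive, trichotomy yields all three equivalences. (Alternatively one can restrict the homotopy to the maximal interval $(\max_{\overline\Omega}q-M,\infty)$ on which $L_\lambda$ is positive and add a continuity--connectedness argument, but the direct comparison above is shorter.)
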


\section{Calculation of the fixed point index }\label{sec:index}

In this section, we compute the fixed point indices of trivial and semi-trivial solutions of \eqref{eq:model}. The results will be applied to study the existence and multiplicity of coexistence states of problem \eqref{eq:model} in next section.

We introduce the following notations.

$E=X\times X$, where $X=\{ u\in C^1(\bar{\Omega}):\, u|_{\partial\Omega} =0\}$,

$W=K\times K$, where $K=\{ u\in X:\, u\geqslant 0\text{ in }\Omega\}$,

$D=\{ (u,v)\in W:\, u\leqslant 1+a, v\leqslant 1+c\},\quad \mathring{D}=\mathrm{int}  D$.

Obviously $W$ is a total wedge of $E$. By the \textit{a priori} estimates (Theorem \ref{thm:upper:bound}), we know that all possible non-negative solutions of \eqref{eq:model} must lie in $D$. So there exists a sufficiently large constant $M>\lambda_1>0$ such that
\[
	a-u-b v f(u,v)+M >0,\qquad  c-v-d u f(u,v)+M > 0
\]
for any $(u,v)\in\bar{D}$.
Define an operator $F: E\to E$ by
\[
	F(u,v)=(M-\triangle)^{-1}
	\begin{pmatrix}
		u [ a-u-b v f(u,v) ]+Mu\\
		v [ c-v-d u f(u,v) ]+Mv
	\end{pmatrix}.
\]
It is easy to see that $F$ is compact and it maps $D$ to $W$. Also, finding solutions of \eqref{eq:model} is equivalent to finding fixed points of $F$.

For any $t\in [0,1]$, define an operator $F_t: E\to E$ by
\[
	F_t(u,v)=(M-\triangle)^{-1}
	\begin{pmatrix}
		t u [ a-u-b v f(u,v) ]+Mu\\
		t v [ c-v-d u f(u,v) ]+Mv
	\end{pmatrix}.
\]
Obviously, $F_t: [0,1]\times D\to W$ is positive and compact, $F_1=F$.

\begin{lem}\label{lem:convexity}
	Let $u,v\in C^1(\overline{\Omega})$ with $u |_{\partial\Omega}=0$, $v|_{\partial\Omega}\geqslant 0$, $u>0$ in $\Omega$ and $\frac{\partial u}{\partial\nu}\big|\,_{\partial\Omega}<0$. Then there is a positive constant $\varepsilon>0$ such that $u+\epsilon v>0$ in $\Omega$.
\end{lem}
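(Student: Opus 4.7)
The plan is to split $\overline\Omega$ into a thin boundary strip, where $u$ vanishes but its normal derivative is bounded away from $0$, and the complementary interior region, where $u$ admits a positive lower bound. On each piece I will control $u+\varepsilon v$ from below in terms of $\varepsilon$, and then choose $\varepsilon>0$ small enough that both estimates are strictly positive.

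For the interior piece, pick any $\delta>0$ and set $K_\delta=\overline{\{x\in\Omega:d(x,\partial\Omega)\geqslant\delta\}}$. Since $u$ is continuous and strictly positive there on a compact set, $m:=\min_{K_\delta}u>0$, whereas $M_v:=\max_{\overline\Omega}|v|<\infty$. Hence $u+\varepsilon v\geqslant m-\varepsilon M_v>0$ on $K_\delta$ once $\varepsilon<m/M_v$ (the case $M_v=0$ is trivial).

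The boundary strip $N_\delta:=\{x\in\overline\Omega:d(x,\partial\Omega)<\delta\}$ is the only delicate piece, and it is where the Hopf-type hypothesis $\partial u/\partial\nu\big|_{\partial\Omega}<0$ is crucial. By continuity of $\partial u/\partial\nu$ on the compact set $\partial\Omega$, there exists $\mu>0$ with $\partial u/\partial\nu\leqslant -\mu$ on $\partial\Omega$. Since $\partial\Omega$ is smooth, for $\delta$ small enough the nearest-point projection $\pi:N_\delta\to\partial\Omega$ is well defined and Lipschitz, and normal coordinates $x=\pi(x)-d(x,\partial\Omega)\,\nu(\pi(x))$ are valid. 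A first-order Taylor expansion of $u$ along the inward normal, together with the uniform continuity of $\nabla u$ on $\overline\Omega$, then gives
\[
u(x)\geqslant \tfrac{\mu}{2}\,d(x,\partial\Omega),\qquad x\in N_\delta,
\]
after possibly shrinking $\delta$. On the other hand, $v\in C^1(\overline\Omega)$ implies $|\nabla v|\leqslant C$ on $\overline\Omega$, and combined with $v(\pi(x))\geqslant 0$ on $\partial\Omega$ this yields $v(x)\geqslant -C\,d(x,\partial\Omega)$ on $N_\delta$. Thus on $\Omega\cap N_\delta$,
\[
u(x)+\varepsilon v(x)\geqslant \bigl(\tfrac{\mu}{2}-\varepsilon C\bigr)\,d(x,\partial\Omega)>0
\]
whenever $\varepsilon<\mu/(2C)$.

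Fixing a $\delta$ for which both estimates hold and taking $\varepsilon<\min\{m/M_v,\mu/(2C)\}$ yields $u+\varepsilon v>0$ on $\Omega=K_\delta\cup(\Omega\cap N_\delta)$. The main obstacle is clearly the boundary strip estimate: converting the pointwise negativity of $\partial u/\partial\nu$ into a uniform linear lower bound $u(x)\geqslant \tfrac{\mu}{2}d(x,\partial\Omega)$ is the only nontrivial ingredient, and it relies on both the $C^1$ regularity of $u$ and the smoothness of $\partial\Omega$ (so that normal coordinates are available). The rest is a routine compactness and triangle-inequality argument.
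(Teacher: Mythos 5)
Your proof is correct and follows essentially the same strategy as the paper's: decompose $\Omega$ into a boundary collar, where the sign condition on $\partial u/\partial\nu$ forces a linear-in-distance lower bound, and a compact interior piece, where $\min u>0$ handles everything. The only difference is presentational — you keep $u$ and $v$ separate and make the collar estimate quantitative via a Taylor expansion, whereas the paper perturbs to $u+\varepsilon_1 v$ first and asserts the collar positivity directly from its negative normal derivative.
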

\begin{proof}
	As $u, v\in C^1$ and $\frac{\partial u}{\partial\nu}|_{\partial\Omega} < 0$, there exists $\varepsilon_1 > 0$ such that $\frac{\partial (u+\varepsilon_1 v)}{\partial\nu}|_{\partial\Omega} < 0$. Note that $(u+\varepsilon_1 v)|_{\partial\Omega}\geqslant 0$.
	Thus, there exists a subset $\Omega_0\subset\subset\Omega$ such that $u+\varepsilon_1 v > 0$ in $\Omega\setminus\Omega_0$.  As $u > 0$ on $\overline{\Omega}_0$, there exists $\varepsilon_2 > 0$ such that $ u+\varepsilon_2 v > 0$ on $\overline{\Omega}_0$.  Take $\varepsilon=\mathrm{min}\{\varepsilon_1,\varepsilon_2\}$, and we reach the desired conclusion.
\end{proof}

By the definitions in \eqref{eq:def:WySy} and Lemma \ref{lem:convexity}, it can be proved that

\begin{enumerate}[label={\arabic*)}, leftmargin=32pt]\itemsep-10pt
	\item $\overline{W}_{(0,0)}=K\times K$, $S_{(0,0)}=\{(0,0)\}$,  \\
	\item $\overline{W}_{(\theta_{a},0)}=X\times K$, $S_{(\theta_{a},0)}=X\times\{0\}$,   \\
	\item $\overline{W}_{(0,\theta_{c})}=K\times X$, $S_{(0,\theta_{c})}=\{0\}\times X$.
\end{enumerate}

Now, we are ready to analyze the indices of trivial and semi-trivial solutions: $(0,0)$, $(\theta_{a},0)$  and $(0,\theta_{c})$.

\begin{lem}\label{lem:index1}
	It always holds that $\deg_W (I-F, D)=1$. Suppose $a>\lambda_1$.
	
	{\rm(i)} If $c\not=\lambda_1$, then ${\rm index}_W(F,(0,0))=0$.
	
	{\rm(ii)} If $c>\lambda_1\left(\frac{d\theta_{a}}{1+\alpha\theta_{a}}\right)$, then ${\rm index}_W(F,(\theta_{a},0))=0$.
	
	{\rm(iii)} If $c<\lambda_1\left(\frac{d\theta_{a}}{1+\alpha\theta_{a}}\right)$, then ${\rm index}_W(F,(\theta_{a},0))=1$.
\end{lem}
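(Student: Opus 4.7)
The plan is to combine homotopy invariance of the degree with Propositions \ref{prop:index} and \ref{prop:spectrum} applied to the Fr\'echet derivatives of $F$ at each of the three fixed points. For the global identity $\deg_W(I-F,D)=1$, I would use the homotopy $F_t$ introduced above. Any fixed point of $F_t$ in $W$ solves a $t$-scaled version of \eqref{eq:model}, so the maximum-principle argument of Theorem \ref{thm:upper:bound} gives $u\leq ta\leq a<1+a$ and $v\leq tc\leq c<1+c$; hence no fixed point of $F_t$ ever meets $\partial D$. Homotopy invariance reduces the calculation to $\deg_W(I-F_0,D)$, and $F_0=(M-\triangle)^{-1}(Mu,Mv)$ is linear with spectral radius $M/(M+\lambda_1)<1$, so its only fixed point in $D$ is $(0,0)$; Proposition \ref{prop:index}(ii) applied to $F_0$ there yields index $1$ because its derivative (itself) has no eigenvalue above $1$, so $\sigma=0$.

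For part (i) a direct differentiation gives the block-diagonal form
\bee
  F'(0,0)(h,k)=\bigl((M-\triangle)^{-1}(M+a)h,\;(M-\triangle)^{-1}(M+c)k\bigr),
\eee
whose diagonal blocks $T_a,T_c$ have principal eigenvalues $(M+a)/(M+\lambda_1)$ and $(M+c)/(M+\lambda_1)$, sharing the Dirichlet principal eigenfunction $\phi_1>0$. Because $a>\lambda_1$, the first exceeds $1$; taking $w=(\phi_1,0)$, which lies in $W_{(0,0)}\setminus S_{(0,0)}$ since $\overline{W}_{(0,0)}=K\times K$ and $S_{(0,0)}=\{(0,0)\}$, together with $t=(M+\lambda_1)/(M+a)\in(0,1)$ gives $(I-tF'(0,0))w=0\in S_{(0,0)}$, so $F'(0,0)$ has property $\alpha$. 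The assumption $c\neq\lambda_1$ prevents $1$ from being a positive eigenvalue of the second block, providing the invertibility of $I-F'(0,0)$ on $\overline{W}_{(0,0)}$, and Proposition \ref{prop:index}(i) yields index $0$.

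For (ii) and (iii) a parallel computation at $(\theta_a,0)$ gives an upper-triangular derivative with diagonal blocks
\bee
  T_1=(M-\triangle)^{-1}(M+a-2\theta_a),\qquad T_2=(M-\triangle)^{-1}\Bigl(M+c-\tfrac{d\theta_a}{1+\alpha\theta_a}\Bigr).
\eee
The identity $\lambda_1(\theta_a-a)=0$, which holds because $\theta_a>0$ solves the associated logistic equation, together with the strict monotonicity of $\lambda_1(\cdot)$ forces $\lambda_1(2\theta_a-a)>0$, so Proposition \ref{prop:spectrum} gives $r(T_1)<1$ unconditionally; the same proposition converts the sign of $c-\lambda_1\bigl(d\theta_a/(1+\alpha\theta_a)\bigr)$ into the sign of $r(T_2)-1$. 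In case (ii) let $\phi>0$ be the principal eigenfunction of $T_2$ with eigenvalue $\mu>1$; since $\overline{W}_{(\theta_a,0)}=X\times K$ and $S_{(\theta_a,0)}=X\times\{0\}$, the vector $w=(0,\phi)$ sits in $W_{(\theta_a,0)}\setminus S_{(\theta_a,0)}$, and the triangular form gives $(I-tF'(\theta_a,0))w=(t\psi,0)\in S_{(\theta_a,0)}$ with $t=1/\mu$ and $\psi=(M-\triangle)^{-1}\bigl(\tfrac{b\theta_a}{1+\alpha\theta_a}\phi\bigr)$, confirming property $\alpha$ and the vanishing of the index. In case (iii) both $r(T_1),r(T_2)<1$, so the only candidate positive eigenvector in $\overline{W}_{(\theta_a,0)}\setminus S_{(\theta_a,0)}$, which must come from $T_2$, carries an eigenvalue below $1$; property $\alpha$ therefore fails, and since neither $T_1$ nor $T_2$ has any eigenvalue exceeding $1$ the count $\sigma$ in Proposition \ref{prop:index}(ii) is zero, yielding index $(-1)^0=1$.

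The main obstacle I anticipate is the careful bookkeeping for property $\alpha$ relative to the asymmetric wedges: one must verify that the residue $(I-tF'(y))w$ lands precisely in $S_y$ rather than merely in $\overline{W}_y$, which in case (ii) depends on the upper-triangular block structure killing the second coordinate, and one must ensure $1$ is not an eigenvalue of $F'(y)$ on $\overline{W}_y$ so that Proposition \ref{prop:index} is applicable --- this is exactly what the hypotheses $c\neq\lambda_1$ in (i) and the strict inequalities in (ii)--(iii) supply via Proposition \ref{prop:spectrum}.
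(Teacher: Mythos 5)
Your proposal is correct and follows essentially the same route as the paper: homotopy invariance via $F_t$ for the global degree, then Propositions \ref{prop:index} and \ref{prop:spectrum} applied to the block-(upper-)triangular Fr\'echet derivatives, with property $\alpha$ witnessed by principal eigenfunctions of the diagonal blocks and ruled out when their spectral radii are below one. The only blemish is the intermediate bound $u\leqslant ta$ for fixed points of $F_t$ (the comparison actually yields $u\leqslant a$, not $ta$), but since you only use $u\leqslant a<1+a$ this does not affect the argument.
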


\begin{proof}
	{\it Step 1\;}  To prove that $\deg_W (I-F, D)=1$.
	
	By Theorem \ref{thm:upper:bound}, $F$ has no fixed point on $\partial D$, so $\deg_W(I-F,D)$ is well-defined. For any $t$, the fixed point of $F_t$ satisfies
	\begin{equation}\label{eq:Ft}
	\begin{cases}
		-\triangle u=t u(a-u-b v f(u,v)),\ & x\in\Omega,\\
		-\triangle v=t v (c-v-d u f(u,v)), & x\in\Omega,\\
		u=v=0, & x\in\partial\Omega.
	\end{cases}
	\end{equation}
	It is obvious that any solution $(u,v)$ of \eqref{eq:Ft} must lie in $\mathring{D}$. The homotopy invariance of degree implies that $\deg_W (I-F_t, D)$ is independent of $t$, so
	\[
		\deg_W (I-F, D)=\deg_W (I-F_1, D)=\deg_W (I-F_0, D).
	\]
	Obviously \eqref{eq:Ft} has only trivial solution $(0,0)$ when $t=0$. Thus,
	\[
		\deg_W (I-F_0, D)={\rm index}_W(F_0, (0,0)).
	\]
	Let
	\[
		L := F_0'(0,0)=(M-\triangle)^{-1}
		\begin{pmatrix}
			M & 0\\ 0 & M
		\end{pmatrix}.
	\]
	It can be proved that $r(L)<1$ by Proposition \ref{prop:spectrum}. Therefore $I-L$ is invertible on $\overline{W}_{(0,0)}$, and $L$ does not have \textit{property $\alpha$} on $\overline{W}_{(0,0)}$. By Proposition \ref{prop:index}, ${\rm index}_W(F_0, (0,0))=1$, so $\deg_W (I-F, D)=1$.
		
	{\it Step 2\;}  To prove (i), ${\rm index}_W(F,(0,0))=0$ if $c\not=\lambda_1$.
	
	Obviously, $F$ is a compact operator on $\overline{D}$ and $(0,0)$ is a fixed point. The Fr\'{e}chet derivative of $F$ at $(0,0)$ is given by
	\[
		F'(0,0)=(M-\triangle)^{-1}
		\begin{pmatrix}
			a+M & 0\\ 0 & c+M
		\end{pmatrix}.
	\]
	Suppose $(\xi,\eta)\in\overline{W}_{(0,0)}$ is a fixed point of $F'(0,0)$, i.e.
	\[
	\begin{cases}
		-\triangle\xi=a\xi, \qquad & x\in \Omega, \\
		-\triangle\eta=c\eta, & x\in \Omega, \\
		\xi = \eta =0, & x\in \partial\Omega.
	\end{cases}
	\]
	Since $a>\lambda_1$, $c\not=\lambda_1$, we have $\xi=\eta\equiv 0$ in $\Omega$. Hence $I-F'(0,0)$ is invertible on $\overline{W}_{(0,0)}$.
		
	Now we claim that $F'(0,0)$ has \textit{property $\alpha$}. Since $a>\lambda_1$, we know by Proposition \ref{prop:spectrum} that
	\[
		r_1 := r((M-\triangle)^{-1}(a+M)) >1.
	\]
	Moreover, $r_1$ is the principle eigenvalue of $(M-\triangle)^{-1}(a+M)$ and the corresponding eigenfunction $\phi>0$. Take $t_0=r_1^{-1}\in (0,1)$, thus
	\[
		(I-t_0 F'(0,0)) (\phi,0)=(0,0)\in S_{(0,0)}.
	\]
	Hence $F'(0,0)$ has \textit{property $\alpha$}. By Proposition \ref{prop:index}, ${\rm index}_W(F,(0,0))=0$.
		
	{\it Step 3\;} To prove (ii).
	
	We already know that $\overline{W}_{(\theta_{a},0)}=X\times K$, $S_{(\theta_{a},0)}=X\times\{0\}$. Thus,
	\[
		\overline{W}_{(\theta_{a},0)}\setminus S_{(\theta_{a},0)}=X\times\{K\setminus\{0\}\}.
	\]
	It can be derived that
	\[
		F'(\theta_{a},0)= (M-\triangle)^{-1}
		\begin{pmatrix}
			a-2\theta_{a} +M & \displaystyle \frac{-b\theta_{a}}{1+\alpha\theta_{a}}\\
			0 &  \displaystyle c -\frac{d\theta_{a}}{1+\alpha\theta_{a}} +M
		\end{pmatrix}.
	\]
		
	Let us first prove that $I-F'(\theta_{a},0)$ is invertible. Assume that $(\xi,\eta)\in\overline{W}_{(\theta_{a},0)}$ satisfies
	\begin{equation}\label{eq:DFthe_a}
	\begin{cases}
		-\triangle\xi = a\xi-2\theta_{a}\xi +\displaystyle\frac{-b\theta_{a}}{1+\alpha\theta_{a}}\eta, \quad  & x\in\Omega, \\[3mm]
		-\triangle \eta = c\eta-\displaystyle\frac{d\theta_{a}}{1+\alpha\theta_{a}}\eta  , & x\in\Omega,  \\[2mm]
		\xi=\eta=0, & x\in\partial\Omega.
	\end{cases}
	\end{equation}
	Easy to see that $\eta\equiv 0$ since $c>\lambda_1( \frac{d\theta_{a}}{1+\alpha\theta_{a}})$. If $\xi$ is a solution to the first line of \eqref{eq:DFthe_a}, then $\xi$ is an eigenfunction of
	\[
		-\triangle\xi+(2\theta_{a}-a)\xi=\lambda\xi,
	\]
	corresponding to eigenvalue $\lambda=0$. Thus
	\begin{equation}\label{eq:temp1}
		0\geqslant\lambda_1(2\theta_{a}-a) >\lambda_1(\theta_{a}-a).
	\end{equation}
	The last inequality is due to the monotonicity of $\lambda_1(q)$ with respect to positive oscillations of $q$.
		
	On the other hand, $\theta_{a}>0$ satisfies $-\triangle\theta_{a}+(\theta_{a}-a)\theta_{a} =0$. So
	\begin{equation}\label{eq:temp2}
		\lambda_1(\theta_{a}-a)=0.
	\end{equation}
	Now we have a contradiction from \eqref{eq:temp1} and \eqref{eq:temp2}. Therefore, $\xi=\eta\equiv 0$. The operator $I-F'(\theta_{a},0)$ is invertible.
	
	Now we prove that $F'(\theta_{a},0)$ has \textit{property $\alpha$}. Since $c>\lambda_1(\frac{d\theta_{a}}{1+\alpha\theta_{a}})$, it can be proved that
	\[
		r_2 := r\left((M-\triangle)^{-1}  (c-\frac{d\theta_{a}}{1+\alpha\theta_{a}}+M)\right)>1.
	\]
	Assume that the principle eigenfunction of $(M-\triangle)^{-1}  (c-\frac{d\theta_{a}}{1+\alpha\theta_{a}}+M)$ is $\phi>0$ in $\Omega$. Take $t_0=r_2^{-1}$, thus
	\[
		(I-t_0F'(\theta_{a},0))
		\begin{pmatrix}
			0\\ \phi
		\end{pmatrix}
		=
		\begin{pmatrix}
			\displaystyle (M-\triangle)^{-1}\frac{t_0b\theta_{a}\phi}{1+\alpha\theta_{a}}\\[2mm] 0
		\end{pmatrix}
		\in S_{(\theta_{a},0)}.
	\]
	The operator $F'(\theta_{a},0)$ has \textit{property $\alpha$}, so ${\rm index}_W(F,(\theta_{a},0))=0$ by Proposition \ref{prop:index}.
		
	{\it Step 4\;} To prove (iii).
	
	Same analysis as in Step 3 implies that $I-F'(\theta_{a},0)$ is invertible on $\overline{W}_{(\theta_{a},0)}$.
	Now we claim that $F'(\theta_{a},0)$ does not have \textit{property $\alpha$}.
	
	From the assumption $c<\lambda_1(\frac{d\theta_{a}}{1+\alpha\theta_{a}})$ and Proposition \ref{prop:spectrum}, we know that
	\begin{equation}\label{eq:rad_c}
		r\left((M-\triangle)^{-1}  (c-\frac{d\theta_{a}}{1+\alpha\theta_{a}}+M)\right)<1.
	\end{equation}
	If $F'(\theta_{a},0)$ has \textit{property $\alpha$}, then there exists $0<t<1$ and $(0,\phi_2)\in\overline{W}_{(\theta_{a},0)}\setminus S_{(\theta_{a},0)}$ such that
	\[
		I-t F'(\theta_{a},0) (0,\phi_2)\in S_{(\theta_{a},0)}.
	\]
	In particular,
	\[
		\left(I-t (M-\triangle)^{-1}  (c-\frac{d\theta_{a}}{1+\alpha\theta_{a}}+M)\right)\phi_2=0.
	\]
	Thus $\phi_2$ is an eigenfunction of $(M-\triangle)^{-1}  (c-\frac{d\theta_{a}}{1+\alpha\theta_{a}}+M)$, and $t^{-1}>1$ is the corresponding eigenvalue, which contradicts \eqref{eq:rad_c}. Hence $F'(\theta_{a},0)$ does not have \textit{property $\alpha$}. By Proposition \ref{prop:index},
	\[
		{\rm index}_W(F,(\theta_{a},0))=(-1)^\gamma,
	\]
	where $\gamma$ is the sum of algebraic multiplicities of all eigenvalues of $F'(\theta_{a},0)$ which are greater than one.
		
	Suppose that $\mu^{-1}>1$ is an eigenvalue of $F'(\theta_{a},0)$ with corresponding eigenfunctions $(\xi,\eta)$. Hence,
	\[
		(M-\triangle)^{-1}
		\begin{pmatrix}
			\displaystyle (a-2\theta_{a} +M)\xi+\frac{-b\theta_{a}}{1+\alpha\theta_{a}}\eta\\[2mm]
			\displaystyle (c -\frac{d\theta_{a}}{1+\alpha\theta_{a}} +M)\eta
		\end{pmatrix}
		=\frac{1}{\mu}
		\begin{pmatrix}
			\xi\\\eta
		\end{pmatrix},
	\]
	i.e.
	\begin{equation}\label{eq:index:temp}
	\begin{cases}
		\displaystyle -\triangle\xi+M\xi= \mu\left((a-2\theta_{a} +M)\xi+\frac{-b\theta_{a}}{1+\alpha\theta_{a}}\eta\right),\  & x\in\Omega,\\[2mm]
		\displaystyle -\triangle\eta+M\eta= \mu (c -\frac{d\theta_{a}}{1+\alpha\theta_{a}} +M)\eta,  & x\in\Omega, \\[1mm]
		\xi=\eta=0, & x\in\partial\Omega.
	\end{cases}
	\end{equation}
	Recall that $M$ was chosen sufficiently large such that $\frac{d\theta_{a}}{1+\alpha\theta_{a}} - c - M<0$.  From the second line of \eqref{eq:index:temp}, if $\eta\not\equiv0$, then
	\[
	\begin{aligned}
		0& \geqslant \lambda_1\left(M-\mu (c -\frac{d\theta_{a}}{1+\alpha\theta_{a}} +M)\right)
		= M + \lambda_1\left(\mu (\frac{d\theta_{a}}{1+\alpha\theta_{a}} - c - M)\right) \\
		& > M + \lambda_1\left( \frac{d\theta_{a}}{1+\alpha\theta_{a}} - c - M \right)
		= -c+\lambda_1\left(\frac{d\theta_{a}}{1+\alpha\theta_{a}}\right).
	\end{aligned}
	\]
	This is a contradiction to the condition that $c<\lambda_1(\frac{d\theta_{a}}{1+\alpha\theta_{a}})$. Thus $\eta\equiv 0$.  Substitute $\eta\equiv 0$ into the first line of \eqref{eq:index:temp}, we have
	\[
		-\triangle\xi+ M\xi= \mu(a-2\theta_{a}+ M)\xi.
	\]
	Thus,
	\[
		0\geqslant\lambda_1\left(M-\mu (a-2\theta_{a}+M)\right)>\lambda_1( 2\theta_{a}-a) >\lambda_1(\theta_{a}-a) =0.	
	\]
	So we have a contradiction and there is no eigenvalue of $F'(\theta_{a},0)$ which is greater than $1$. Hence ${\rm index}_W(F,(\theta_{a},0))=1$. The proof of lemma is complete.
\end{proof}

Similar to Lemma \ref{lem:index1}, we have
\begin{lem}\label{lem:index2}
	Suppose $c>\lambda_1$.
	
	{\rm(i)} If $a\not=\lambda_1$, then ${\rm index}_W(F,(0,0))=0$.
	
	{\rm(ii)} If $ a>\lambda_1(\frac{b\theta_{c}}{1+\beta\theta_{c}})$, then ${\rm index}_W(F, (0,\theta_{c}))=0$.
	
	{\rm(iii)} If $ a<\lambda_1(\frac{b\theta_{c}}{1+\beta\theta_{c}})$, then ${\rm index}_W(F, (0,\theta_{c}))=1$.

\end{lem}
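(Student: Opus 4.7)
The plan is to mirror the proof of Lemma \ref{lem:index1} with the roles of $u$ and $v$ (equivalently, of $(a,b,\alpha)$ and $(c,d,\beta)$) interchanged, exploiting the structural symmetry of \eqref{eq:model}. Throughout, I would use the characterizations
$\overline{W}_{(0,0)} = K \times K$, $S_{(0,0)} = \{(0,0)\}$, and $\overline{W}_{(0,\theta_c)} = K \times X$, $S_{(0,\theta_c)} = \{0\} \times X$, together with Proposition \ref{prop:index} and Proposition \ref{prop:spectrum}.

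For part (i), I would compute $F'(0,0)$ exactly as in Step 2 of the preceding lemma; invertibility of $I - F'(0,0)$ on $\overline{W}_{(0,0)}$ follows from $c > \lambda_1$ and $a \neq \lambda_1$ via the same eigenvalue argument. To show \emph{property $\alpha$}, this time I would use the bottom-right block: since $c > \lambda_1$, Proposition \ref{prop:spectrum} yields $r_1 := r((M-\Delta)^{-1}(c+M)) > 1$ with a positive eigenfunction $\phi > 0$, and taking $t_0 = r_1^{-1} \in (0,1)$ gives $(I - t_0 F'(0,0))(0,\phi) = (0,0) \in S_{(0,0)}$. Proposition \ref{prop:index} then yields ${\rm index}_W(F,(0,0)) = 0$.

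For parts (ii) and (iii), I would first compute the Fr\'echet derivative
\[
F'(0,\theta_c) = (M-\Delta)^{-1}
\begin{pmatrix}
a - \dfrac{b\theta_c}{1+\beta\theta_c} + M & 0 \\[2mm]
\dfrac{-d\theta_c}{1+\beta\theta_c} & c - 2\theta_c + M
\end{pmatrix},
\]
which is the transpose-type analogue of the matrix in Step 3. Invertibility of $I - F'(0,\theta_c)$ on $\overline{W}_{(0,\theta_c)}$ follows by the same contradiction using $\lambda_1(2\theta_c - c) > \lambda_1(\theta_c - c) = 0$ (from the equation for $\theta_c$) to rule out a nontrivial second component, and the assumption $a \neq \lambda_1(\tfrac{b\theta_c}{1+\beta\theta_c})$ to rule out a nontrivial first component. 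For (ii), when $a > \lambda_1(\tfrac{b\theta_c}{1+\beta\theta_c})$, I would produce \emph{property $\alpha$} by choosing $\phi > 0$ to be the positive eigenfunction of $(M-\Delta)^{-1}(a - \tfrac{b\theta_c}{1+\beta\theta_c} + M)$ with eigenvalue $r_2 > 1$, and taking $t_0 = r_2^{-1}$, so that $(I - t_0 F'(0,\theta_c))(\phi,0) \in S_{(0,\theta_c)} = \{0\} \times X$ (the surviving entry lies in $X$ via the coupling term).

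For (iii), assuming $a < \lambda_1(\tfrac{b\theta_c}{1+\beta\theta_c})$, I would argue as in Step 4: \emph{property $\alpha$} fails since any would-be witness $(\phi_1,0)$ would force $\phi_1$ to be an eigenfunction of $(M-\Delta)^{-1}(a - \tfrac{b\theta_c}{1+\beta\theta_c} + M)$ with eigenvalue $t^{-1} > 1$, contradicting the spectral radius being less than $1$. Then I would compute $\gamma$, the count of eigenvalues $\mu^{-1} > 1$ of $F'(0,\theta_c)$. Because the matrix is lower triangular in the relevant sense, the eigenvalue equation decouples: the first line forces $\xi \equiv 0$ via the spectral bound $\lambda_1(\tfrac{b\theta_c}{1+\beta\theta_c}) > a$, and substituting into the second line forces a contradiction with $\lambda_1(2\theta_c - c) > \lambda_1(\theta_c - c) = 0$. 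Hence $\gamma = 0$ and ${\rm index}_W(F,(0,\theta_c)) = 1$. The only real obstacle is bookkeeping the order of entries in the Jacobian, since the coupling term now sits below the diagonal rather than above it, so care is needed that the \emph{property $\alpha$} test vectors are chosen in the correct factor of $\overline{W}_{(0,\theta_c)}$.
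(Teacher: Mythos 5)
Your proposal is correct and is exactly the argument the paper intends: the paper proves this lemma only by the remark ``Similar to Lemma \ref{lem:index1}'', and your write-up carries out that symmetry faithfully, including the correct (lower-triangular) form of $F'(0,\theta_c)$ and the correct placement of the \emph{property $\alpha$} test vectors in $\overline{W}_{(0,\theta_c)}=K\times X$. No gaps beyond those already present in the paper's own treatment of Lemma \ref{lem:index1}.
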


\section{Existence of coexistence states}\label{sec:existence}

In this section, we investigate the existence of positive solutions of \eqref{eq:model} by using the results obtained in Section \ref{sec:index}.

\begin{thm}\label{thm:coexist}
	{\rm(i)} If $a\leqslant\lambda_1$, then the only possible non-negative solutions of \eqref{eq:model} are $(0,0)$ and $(0,\theta_{c})$.

	{\rm(ii)} If $c\leqslant\lambda_1$, then the only possible non-negative solutions of \eqref{eq:model} are $(0,0)$ and $(\theta_{a},0)$.
\end{thm}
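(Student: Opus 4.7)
The plan is to prove part (i) in detail; part (ii) will follow by the symmetric interchange $(u,a,b,\alpha)\leftrightarrow(v,c,d,\beta)$. Fix a non-negative solution $(u,v)$ of \eqref{eq:model} under the hypothesis $a\leqslant\lambda_1$. The strategy is to show first that the component $u$ must vanish identically; once that is done, the system collapses to the scalar logistic equation for $v$ already discussed in Section \ref{sec:pre}, whose only non-negative solutions are $v\equiv 0$ and (when $c>\lambda_1$) $v=\theta_{c}$, yielding exactly the pairs $(0,0)$ and $(0,\theta_{c})$ claimed.

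To force $u\equiv 0$ I argue by contradiction. Assume $u\not\equiv 0$. Rewriting the first equation of \eqref{eq:model} as
\[
-\triangle u+\bigl(u+bvf(u,v)-a\bigr)u=0,
\]
the coefficient in front of $u$ is bounded thanks to the \textit{a priori} estimate of Theorem \ref{thm:upper:bound} ($u\leqslant a$, $v\leqslant c$), so the strong maximum principle applies and yields $u>0$ throughout $\Omega$. Setting $q(x):=u(x)+b\,v(x)\,f(u(x),v(x))\geqslant 0$, the same equation reads $(-\triangle+q)u=au$ with $u|_{\partial\Omega}=0$. A positive function solving a linear Dirichlet eigenvalue problem must be the principal eigenfunction, so necessarily $\lambda_1(q)=a$. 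On the other hand $q\geqslant 0$ with $q\not\equiv 0$ (in fact $q>0$ in $\Omega$), and the strict monotonicity of $\lambda_1(\cdot)$ with respect to the potential, the very fact already used in the proof of Lemma \ref{lem:index1}, gives $\lambda_1(q)>\lambda_1(0)=\lambda_1\geqslant a$, contradicting $\lambda_1(q)=a$. Therefore $u\equiv 0$.

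Substituting $u\equiv 0$ back into \eqref{eq:model} leaves exactly $-\triangle v=v(c-v)$ with $v|_{\partial\Omega}=0$, whose non-negative solutions were recorded in Section \ref{sec:pre}, completing part (i); part (ii) is identical after the symmetric relabeling of roles. There is no serious obstacle: the only substantive ingredient is the strict monotonicity $\lambda_1(q_1)<\lambda_1(q_2)$ when $0\leqslant q_1\leqslant q_2$ with $q_1\not\equiv q_2$, a standard property of the Dirichlet principal eigenvalue that the paper has already been using freely.
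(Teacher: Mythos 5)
Your argument is correct and is essentially the paper's own proof: both reduce to observing that a nontrivial non-negative $u$ is a positive principal eigenfunction of $-\triangle+u+bvf(u,v)$ with eigenvalue $a$ (equivalently, $\lambda_1(u+bvf(u,v)-a)=0$), and then invoke strict monotonicity of $\lambda_1$ in the potential to conclude $a>\lambda_1$. Your write-up just adds the explicit strong-maximum-principle step and the explicit reduction to the scalar logistic equation for $v$, which the paper leaves implicit.
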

\begin{proof}
	If \eqref{eq:model} has a non-negative solution $(u,v)$ with $u\not\equiv 0$, then
	\[
	\begin{cases}
		-\triangle u=u (a-u-bvf(u,v)),\  & x\in\Omega,\\
		u=0, & x\in\partial\Omega.
	\end{cases}
	\]
	Thus
	\[
		0=\lambda_1(u+bvf(u,v)-a) > -a+\lambda_1.
	\]
	Therefore, $u,v\geqslant 0$, $u\not\equiv 0$ implies that $a>\lambda_1$, (i) holds. Part (ii) can be proved similarly.
\end{proof}

\begin{thm}\label{thm:exist}
	{\rm(i)} If \eqref{eq:model} admits coexistence states, then $a>\lambda_1$, $c>\lambda_1$.

	{\rm(ii)} If $  a>\lambda_1(\frac{b\theta_{c}}{1+\beta\theta_{c}})$, $  c>\lambda_1(\frac{d\theta_{a}}{1+\alpha\theta_{a}})$, then \eqref{eq:model} admits at least one coexistence state.

	{\rm(iii)} If $ \lambda_1<a<\lambda_1(\frac{b\theta_{c}}{1+\beta\theta_{c}})$, $ \lambda_1<c<\lambda_1(\frac{d\theta_{a}}{1+\alpha\theta_{a}})$, then \eqref{eq:model} admits at least one coexistence state.
\end{thm}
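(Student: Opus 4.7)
My plan is to dispatch (i) via the contrapositive of Theorem \ref{thm:coexist}, and to obtain (ii) and (iii) by a standard degree-counting argument: Lemma \ref{lem:index1} supplies $\deg_W(I-F,D)=1$, and by additivity any discrepancy between this value and the sum of fixed-point indices at the trivial and semi-trivial solutions must be made up by coexistence states inside $\mathring{D}$. Throughout, Theorem \ref{thm:upper:bound} is what guarantees that $F$ has no fixed points on $\partial D$, so additivity of the degree applies on $D$.

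For (i), a coexistence state $(u,v)$ has $u\not\equiv 0$ and $v\not\equiv 0$; Theorem \ref{thm:coexist}(i) then excludes $a\leqslant\lambda_1$ and Theorem \ref{thm:coexist}(ii) excludes $c\leqslant\lambda_1$. For (ii), I would begin by observing that $b\theta_{c}/(1+\beta\theta_{c})$ and $d\theta_{a}/(1+\alpha\theta_{a})$ are positive in $\Omega$, so the strict monotonicity of $\lambda_1(q)$ with respect to positive perturbations of $q$ gives $\lambda_1(b\theta_{c}/(1+\beta\theta_{c}))>\lambda_1$ and $\lambda_1(d\theta_{a}/(1+\alpha\theta_{a}))>\lambda_1$; the hypotheses therefore imply $a,c>\lambda_1$, which is what makes Lemmas \ref{lem:index1} and \ref{lem:index2} applicable. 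Those lemmas then yield ${\rm index}_W(F,(0,0))=0$ by Lemma \ref{lem:index1}(i), ${\rm index}_W(F,(\theta_{a},0))=0$ by Lemma \ref{lem:index1}(ii), and ${\rm index}_W(F,(0,\theta_{c}))=0$ by Lemma \ref{lem:index2}(ii). If \eqref{eq:model} admitted no coexistence state, additivity of the degree would force $1=\deg_W(I-F,D)=0+0+0$, a contradiction, so at least one coexistence state must exist.

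For (iii), the hypotheses $a>\lambda_1$ and $c>\lambda_1$ are stated explicitly, so Lemmas \ref{lem:index1} and \ref{lem:index2} apply immediately: Lemma \ref{lem:index1}(i) gives ${\rm index}_W(F,(0,0))=0$, Lemma \ref{lem:index1}(iii) combined with $c<\lambda_1(d\theta_{a}/(1+\alpha\theta_{a}))$ gives ${\rm index}_W(F,(\theta_{a},0))=1$, and Lemma \ref{lem:index2}(iii) combined with $a<\lambda_1(b\theta_{c}/(1+\beta\theta_{c}))$ gives ${\rm index}_W(F,(0,\theta_{c}))=1$. The absence of a coexistence state would then require $1=0+1+1=2$, which is impossible, so at least one coexistence state exists. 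I do not foresee a genuine obstacle beyond bookkeeping; the only point that needs verifying at each step is that the relevant inequality hypothesis of Lemmas \ref{lem:index1} and \ref{lem:index2} is in force, which in (ii) is what forced the initial reduction to $a,c>\lambda_1$.
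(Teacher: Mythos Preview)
Your proposal is correct and follows essentially the same approach as the paper: part (i) is reduced to Theorem~\ref{thm:coexist}, and parts (ii) and (iii) are obtained by comparing $\deg_W(I-F,D)=1$ with the sum of the indices at the trivial and semi-trivial solutions computed in Lemmas~\ref{lem:index1} and~\ref{lem:index2}. Your explicit check in (ii) that the hypotheses force $a,c>\lambda_1$ (needed to invoke those lemmas) is a detail the paper leaves implicit, so your write-up is in fact slightly more careful.
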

\begin{proof}
	(i) holds due to Theorem \ref{thm:coexist}. It remains to prove (ii) and (iii).
	
	If $  a>\lambda_1(\frac{b\theta_{c}}{1+\beta\theta_{c}})$, $  c>\lambda_1(\frac{d\theta_{a}}{1+\alpha\theta_{a}})$, it follows from Lemma \ref{lem:index1} and  \ref{lem:index2}  that
	\begin{align*}
		& \deg_W (I-F, D)-{\rm index}_W(F,(0,0))\\
		& \quad -{\rm index}_W(F,(\theta_{a},0))- {\rm index}_W(F,(0,\theta_{c})) = 1 - 0 - 0 - 0 = 1.
	\end{align*}
	Thus \eqref{eq:model} admits at least one coexistence state.
		
	Similarly when $ \lambda_1<a<\lambda_1(\frac{b\theta_{c}}{1+\beta\theta_{c}})$, $ \lambda_1<c<\lambda_1(\frac{d\theta_{a}}{1+\alpha\theta_{a}})$, we know that
	\begin{align*}
		& \deg_W (I-F, D)-{\rm index}_W(F,(0,0))\\
		& \quad -{\rm index}_W(F,(\theta_{a},0))- {\rm index}_W(F,(0,\theta_{c})) = 1 - 0 - 1 - 1 = -1
	\end{align*}
	by Lemma \ref{lem:index1} and \ref{lem:index2}. Hence \eqref{eq:model} also admits at least one coexistence state.
\end{proof}

\section{Stability and multiplicity of coexistence states}\label{sec:stab:mult}

We will study the stability and multiplicity of positive solutions of \eqref{eq:model} for $\alpha$ or $\beta$ suitably large. We first present an asymptotic result.

\begin{lem}\label{lem:large:albe:existence}
	Let $a, c >\lambda_1$. For any small $\varepsilon$, $0<\varepsilon<\min(a-\lambda_1, c-\lambda_1)$,  there is a constant $\overline{\alpha}(\varepsilon)$ (or $\overline{\beta}(\varepsilon)$) such that as $\alpha\geqslant\overline{\alpha}(\varepsilon)$ (or $\beta\geqslant\overline{\beta}(\varepsilon)$), problem \eqref{eq:model} has at least one positive solution $(u, v)$ satisfying
	\begin{equation}\label{eq:upp:low}
		\theta_{a-\varepsilon}\leqslant u\leqslant\theta_{a},\qquad
		\theta_{c-\varepsilon}\leqslant  v\leqslant\theta_{c}.
	\end{equation}
\end{lem}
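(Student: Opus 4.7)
The strategy is a classical coupled upper--lower solution argument tailored to the quasi\-monotone non\-increasing (competitive) structure of \eqref{eq:model}. I propose the upper pair $(\bar u,\bar v)=(\theta_{a},\theta_{c})$ and the lower pair $(\underline u,\underline v)=(\theta_{a-\varepsilon},\theta_{c-\varepsilon})$. Since $a-\varepsilon,\,c-\varepsilon>\lambda_1$ by hypothesis, all four functions are well defined, positive in $\Omega$, vanish on $\partial\Omega$, and the monotone dependence of the logistic steady state on the growth rate gives $\underline u\leq\bar u$ and $\underline v\leq\bar v$. The upper-solution inequalities are immediate: $-\lap\theta_a=\theta_a(a-\theta_a)\ge\theta_a\bigl(a-\theta_a-b\theta_{c-\varepsilon}f(\theta_a,\theta_{c-\varepsilon})\bigr)$ because the dropped term is non-negative, and similarly for $\theta_c$.

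The real work is the lower-solution inequalities, which after cancelling the logistic part reduce to
\[
b\,\theta_c\,f(\theta_{a-\varepsilon},\theta_c)\leq\varepsilon,\qquad d\,\theta_a\,f(\theta_a,\theta_{c-\varepsilon})\leq\varepsilon\quad\text{on }\Omega.
\]
The second one is elementary under large $\alpha$: since $\theta_a/(1+\alpha\theta_a)\leq 1/\alpha$, one gets $d\theta_a f(\theta_a,\theta_{c-\varepsilon})\leq d/\alpha$. The first is the main obstacle, because the numerator contains $\theta_c$ while the $\alpha$-factor is attached to $\theta_{a-\varepsilon}$, and both $\theta_c$ and $\theta_{a-\varepsilon}$ vanish on $\partial\Omega$. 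I would handle this by a Hopf-lemma comparison: $\theta_c,\theta_{a-\varepsilon}\in C^1(\overline\Omega)$, positive in $\Omega$, vanish on $\partial\Omega$ with strictly negative outer normal derivative, so the quotient $\theta_c/\theta_{a-\varepsilon}$ extends continuously to $\overline\Omega$ and is bounded above by a constant $C=C(\varepsilon)$. Then
\[
\frac{b\,\theta_c}{(1+\alpha\theta_{a-\varepsilon})(1+\beta\theta_c)}\leq\frac{b\theta_c}{1+\alpha\theta_{a-\varepsilon}}\leq bC\cdot\frac{\theta_{a-\varepsilon}}{1+\alpha\theta_{a-\varepsilon}}\leq\frac{bC}{\alpha}.
\]
Choosing $\overline\alpha(\varepsilon):=\max\{d,bC(\varepsilon)\}/\varepsilon$ makes both inequalities hold. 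The large-$\beta$ alternative is symmetric: there the first inequality is the easy one ($b\theta_c/(1+\beta\theta_c)\leq b/\beta$), and the Hopf ratio bound is applied instead to $\theta_a/\theta_{c-\varepsilon}$ to dispose of the second.

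With coupled ordered upper and lower solutions in hand, I would conclude by invoking the standard monotone iteration scheme for elliptic systems of competitive (quasi-monotone non-increasing) type: starting from the ``diagonal'' data $(\underline u,\bar v)$ and $(\bar u,\underline v)$ one constructs two monotone sequences that, by elliptic $L^p$-regularity and a standard compactness argument, converge in $C^{2,\gamma}(\overline\Omega)$ to classical solutions of \eqref{eq:model} lying in the order interval $[(\underline u,\underline v),(\bar u,\bar v)]$. In particular \eqref{eq:model} admits at least one positive solution $(u,v)$ with $\theta_{a-\varepsilon}\leq u\leq\theta_a$ and $\theta_{c-\varepsilon}\leq v\leq\theta_c$, which is the assertion. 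The only non-routine ingredient is the Hopf-based ratio bound needed to absorb the mismatch between the numerator and the saturation denominator in inequality~(I); everything else is bookkeeping.
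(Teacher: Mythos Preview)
Your proof is correct and follows the paper's approach: the same coupled upper/lower pair $(\theta_a,\theta_c)$, $(\theta_{a-\varepsilon},\theta_{c-\varepsilon})$, the same reduction of the lower-solution conditions to the two inequalities
\[
b\,\theta_c\,f(\theta_{a-\varepsilon},\theta_c)\leq\varepsilon,\qquad d\,\theta_a\,f(\theta_a,\theta_{c-\varepsilon})\leq\varepsilon,
\]
and the same conclusion via monotone iteration for competitive systems.

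The only point of divergence is how the first inequality is verified on all of $\Omega$. The paper splits $\Omega$ into a boundary collar (where $\theta_c$ is small, so $b\theta_c f\leq b\theta_c<\varepsilon$ regardless of $\alpha$) and a compact interior piece (where $\theta_{a-\varepsilon}\geq\delta>0$, so the $(1+\alpha\theta_{a-\varepsilon})$ factor forces the term to $0$ uniformly as $\alpha\to\infty$). You instead invoke a global Hopf-lemma ratio bound $\theta_c\leq C(\varepsilon)\theta_{a-\varepsilon}$ on $\overline\Omega$, which turns the estimate into the single line $b\theta_c/(1+\alpha\theta_{a-\varepsilon})\leq bC\theta_{a-\varepsilon}/(1+\alpha\theta_{a-\varepsilon})\leq bC/\alpha$. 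Your device is a bit cleaner and yields an explicit $\overline\alpha(\varepsilon)$; the paper's boundary/interior split is more elementary in that it avoids appealing to the Hopf comparison, at the cost of a slightly less quantitative conclusion. Either route is perfectly adequate here.
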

\begin{proof}
	From the similarities of two competitive species, the effects of $\alpha$ and $\beta$ in $f(u,v)$ are equivalent, so we only need to prove the lemma for $\alpha$.
	
	Denote $\underline{U}= (\underline{u},\underline{v})=(\theta_{a-\varepsilon},\theta_{c-\varepsilon})$ and $\overline{U}=(\overline{u},\overline{v})=(\theta_{a},\theta_{c})$. It is obvious that
	\[
		u(a -u- b v f(u,v)),\qquad  v(c-v -d u f(u,v))
	\]
	are Lipschitz continuous w.r.t. $u$ and $v$ for $\underline{U}\leqslant (u,v)\leqslant\overline{U}$.  If we can prove that $\overline{U}$ and $\underline{U}$ are the upper and lower solutions of \eqref{eq:model}, then \eqref{eq:model} has at least one coexistence state $U= (u,v)$ that satisfies \eqref{eq:upp:low}.
	
	To show that $(\overline{U}, \underline{U})$ is a pair of upper and lower solutions, it suffices to prove the following inequalities:
	\begin{eqnarray}
		\triangle\overline{u}+\overline{u}(a-\overline{u}-b\underline{v} f(\overline{u},\underline{v}))&\leqslant & 0,  \label{eq:ul:sol:1}\\
		\triangle\overline{v}+\overline{v}(c-\overline{v}-d\underline{u}f(\underline{u},\overline{v}))&\leqslant & 0,\label{eq:ul:sol:2}\\
		\triangle\underline{u}+\underline{u}(a -\underline{u}- b\overline{v} f(\underline{u},\overline{v}))&\geqslant & 0, \label{eq:ul:sol:3}\\
		\triangle\underline{v}+\underline{v}(c- \underline{v}-d\overline{u}f(\overline{u},\underline{v}))&\geqslant & 0.   \label{eq:ul:sol:4}
	\end{eqnarray}
	
	The validity of \eqref{eq:ul:sol:1} and \eqref{eq:ul:sol:2} is obvious, now we prove inequalities \eqref{eq:ul:sol:3} and \eqref{eq:ul:sol:4} for $\alpha$ sufficiently large. A direct computation gives
	\[
	\begin{aligned}
		\triangle\underline{u}+\underline{u}(a-\underline{u}-b\overline{v}f(\underline{u},\overline{v})=\theta_{a-\varepsilon}(\varepsilon- b\theta_{c}f(\theta_{a-\varepsilon},\theta_{c})),\\
		\triangle\underline{v}+\underline{v}(c-\underline{v}-d\overline{u}f(\overline{u},\underline{v}))=\theta_{c-\varepsilon}(\varepsilon-d\theta_{a}f(\theta_{a},\theta_{c-\varepsilon})).
	\end{aligned}
	\]
	As $\theta_{a}=\theta_{c}=0$ on  $\partial\Omega$, it is clear that the inequalities hold near $\partial\Omega$.  Noting that, as $\alpha\rightarrow\infty$,
	\[
		b\theta_{c} f(\theta_{a-\varepsilon},\theta_{c})\rightarrow 0,\quad   d\theta_{a}f(\theta_{a},\theta_{c-\varepsilon})\rightarrow 0, \quad x\in \Omega' \subset\subset \Omega
	\]
	uniformly on any compact subset $\Omega'$ of $\Omega$, so inequalities \eqref{eq:ul:sol:3} and \eqref{eq:ul:sol:4} also hold in $\Omega'$ provided that $\alpha$ is sufficiently large. The theorem is proved.
\end{proof}

\begin{thm}\label{thm:large:albe:li:sta}
	Assume that $a,c>\lambda_1$. Then \eqref{eq:model} has at least one positive solution which is linearly stable and non-degenerate if $\alpha$ (or $\beta$) is suitably large.
\end{thm}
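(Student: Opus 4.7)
The plan is to apply Lemma \ref{lem:large:albe:existence} to secure a positive solution $(u_\alpha,v_\alpha)$ sandwiched between $(\theta_{a-\varepsilon},\theta_{c-\varepsilon})$ and $(\theta_a,\theta_c)$ whenever $\alpha\geqslant\overline{\alpha}(\varepsilon)$, and then to establish linear stability and non-degeneracy by a spectral perturbation argument applied to the linearised operator $L_\alpha$ of \eqref{eq:model} at $(u_\alpha,v_\alpha)$. Letting $\varepsilon\to 0^+$ and $\alpha\to\infty$ in coordination, one has $(u_\alpha,v_\alpha)\to(\theta_a,\theta_c)$ uniformly, so the task reduces to showing that $L_\alpha$ converges in norm to a limit operator whose spectrum lies strictly in the open right half plane.

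Writing $F_1(u,v)=u(a-u-bvf)$, $F_2(u,v)=v(c-v-duf)$ and using
\[
	f+u f_u=\frac{1}{(1+\alpha u)^2(1+\beta v)},\qquad f+v f_v=\frac{1}{(1+\alpha u)(1+\beta v)^2},
\]
the Jacobian of $(F_1,F_2)$ at $(u_\alpha,v_\alpha)$ becomes
\[
	J_\alpha=
	\begin{pmatrix}
	a-2u_\alpha-\dfrac{bv_\alpha}{(1+\alpha u_\alpha)^2(1+\beta v_\alpha)} & -\dfrac{bu_\alpha}{(1+\alpha u_\alpha)(1+\beta v_\alpha)^2} \\[4mm]
	-\dfrac{dv_\alpha}{(1+\alpha u_\alpha)^2(1+\beta v_\alpha)} & c-2v_\alpha-\dfrac{du_\alpha}{(1+\alpha u_\alpha)(1+\beta v_\alpha)^2}
	\end{pmatrix}.
\]
The off-diagonal coupling entries carry a factor $u_\alpha/(1+\alpha u_\alpha)\leqslant 1/\alpha$ (respectively for $v_\alpha$) and so tend to $0$ in $L^\infty(\Omega)$. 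The diagonal coupling correction $bv_\alpha/[(1+\alpha u_\alpha)^2(1+\beta v_\alpha)]$ is controlled by combining the Hopf-type lower bound $u_\alpha\geqslant\theta_{a-\varepsilon}\geqslant c_0\,\mathrm{dist}(x,\partial\Omega)$ with the upper bound $v_\alpha\leqslant \theta_c\leqslant C_0\,\mathrm{dist}(x,\partial\Omega)$ and the elementary inequality $\sup_{s\geqslant 0} s/(1+s)^2=1/4$ applied to $s=\alpha c_0\,\mathrm{dist}(x,\partial\Omega)$, yielding a uniform $O(\alpha^{-1})$ bound, and symmetrically for the $(2,2)$ entry. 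Hence $L_\alpha$ converges in operator norm on $L^2(\Omega)^2$ to the decoupled self-adjoint limit $L_\infty=\mathrm{diag}\bigl(-\triangle+(2\theta_a-a),\,-\triangle+(2\theta_c-c)\bigr)$.

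To locate $\sigma(L_\infty)$, I would invoke the strict monotonicity of $\lambda_1(q)$ under positive oscillations of $q$ (already exploited inside Lemma \ref{lem:index1}): since $\theta_a$ is itself a positive solution of $-\triangle\theta_a+(\theta_a-a)\theta_a=0$, it is a principal eigenfunction of $-\triangle+(\theta_a-a)$ with eigenvalue $0$, so $\lambda_1(\theta_a-a)=0$ and consequently $\lambda_1(2\theta_a-a)>0$; symmetrically $\lambda_1(2\theta_c-c)>0$. Thus $\sigma(L_\infty)\subset[\mu,\infty)$ for some $\mu>0$, and by upper semicontinuity of the spectrum under norm-small perturbations, every eigenvalue of $L_\alpha$ lies in $\{\mathrm{Re}\,z>0\}$ once $\alpha$ is sufficiently large, giving linear stability; the absence of $0$ from the spectrum is non-degeneracy.

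The principal obstacle is making the uniform $L^\infty(\Omega)$ decay of the diagonal coupling term rigorous: near $\partial\Omega$ the factor $(1+\alpha u_\alpha)^{-2}$ is not small, so the bound must come from trading the boundary decay of $v_\alpha$ against the interior growth of $u_\alpha$, which is precisely where the Hopf-type estimates on $\theta_{a-\varepsilon}$ and $\theta_c$ enter. A secondary subtlety is that Lemma \ref{lem:large:albe:existence} only provides, for each fixed $\varepsilon$, a threshold $\overline{\alpha}(\varepsilon)$; one must coordinate $\varepsilon\to 0^+$ with $\alpha\to\infty$ so that the perturbation estimates remain uniform, which is automatic because the whole spectral argument is insensitive to the exact sandwich gap once $\alpha$ is chosen large enough.
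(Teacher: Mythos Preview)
Your argument is correct and reaches the conclusion, but it proceeds along a genuinely different line from the paper. You show that the linearised operator $L_\alpha$ converges \emph{in norm} (as a bounded perturbation of the common unbounded part $-\triangle$) to the decoupled self-adjoint limit $L_\infty$, by exploiting the Hopf-type boundary estimate to force all coupling and correction terms to vanish in $L^\infty(\Omega)$ at rate $O(1/\alpha)$; spectral stability under bounded perturbations of a self-adjoint operator with spectrum in $[\mu,\infty)$, $\mu>0$, then gives the result directly. The paper instead argues by contradiction: assuming a sequence of solutions $(u_{\alpha_i},v_{\alpha_i})$ carrying eigenvalues $\mu_i$ with $\mathrm{Re}\,\mu_i\leqslant 0$, it bounds $\mu_i$ by an energy identity, extracts convergent subsequences of the normalised eigenfunctions via elliptic $W^{2,p}$ regularity and compact embedding, passes to the limit in the eigenvalue equation, and contradicts $\lambda_1(2\theta_a-a)>0$. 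The paper's route sidesteps your boundary estimate entirely---pointwise decay of the coupling terms on compact subsets of $\Omega$ together with their uniform boundedness suffices for the limit passage---so it is more portable to settings where a uniform $L^\infty$ bound on the coefficients may fail. Your route, by contrast, is more quantitative (it yields an explicit smallness criterion $\|J_\alpha-J_\infty\|_{L^\infty}<\mu$) and bypasses the eigenfunction-compactness machinery altogether.
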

\begin{proof}
	From the similarities of two competitive species, it suffices to prove the theorem for $\alpha$.
	
	Take a positive sequence $\{\varepsilon_i\}$, $\varepsilon_i\to 0^+$. By Lemma \ref{lem:large:albe:existence}, there exist $\overline{\alpha}(\varepsilon_i)$ suitably large such that when $\alpha_i\geqslant\overline{\alpha}(\varepsilon_i)$, the problem \eqref{eq:model} has at least one positive solution, denoted by $(u_{\alpha_i}, v_{\alpha_i})$, satisfying
	\begin{equation}\label{ineq:uv}
		\theta_{a-\varepsilon_i}\leqslant u_{\alpha_i}\leqslant\theta_{a},\quad
		\theta_{c-\varepsilon_i}\leqslant v_{\alpha_i}\leqslant\theta_{c}.
	\end{equation}
	We claim that such positive solutions $(u_{\alpha_i},v_{\alpha_i})$ are also linearly stable if $i$ is suitably large.
	
	Assume the contrary that there exists $\alpha_i\to\infty$, $\mu_i$ satisfying $\mathrm{Re}\mu_i\leqslant 0$, and $(\xi_i,\eta_i)\not\equiv 0$ satisfying $\|\xi_i\|_{L^2}^2+\|\eta_i\|_{L^2}^2=1$, such that
	\begin{equation}\label{eq:stat}
	\begin{cases}
		-\triangle\xi_i-(a-2u_i- f_i)\xi_i+f_i^*\eta_i=\mu_i\xi_i,  \quad &  x\in\Omega,\\
		-\triangle\eta_i+g_i\xi_i-(c-2v_i- g_i^*)\eta_i=\mu_i\eta_i,  & x\in\Omega,\\
		\xi_i=\eta_i=0, & x\in\partial\Omega,
	\end{cases}
	\end{equation}
	where $(u_i, v_i)=(u_{\alpha_i }, v_{\alpha_i})$, and
	\[
	\begin{aligned}
		f_i=\frac{bv_i}{(1+\alpha_i u_i)^2(1+\beta v_i)},\qquad    f_i^*=\frac{bu_i}{(1+\alpha_i u_i)(1+\beta v_i)^2}, \\
		g_i=\frac{dv_i}{(1+\alpha_i u_i)^2(1+\beta v_i)},\qquad    g_i^*=\frac{du_i}{(1+\alpha_i u_i)(1+\beta v_i)^2}.
	\end{aligned}
	\]
	Multiply the first line of (\ref{eq:stat}) by $\overline{\xi}_i$, the second line by $\overline{\eta}_i$, add them together, then integrate over $\Omega$, we obtain
	\[
	\begin{aligned}
		\mu_i=&\int_{\Omega} (|\nabla\xi_i|^2+|\nabla\eta_i|^2) d x +\int_{\Omega}\left(|\xi_i|^2(f_i+2u_i-a)+f_i^*\eta_i\overline{\xi}_i\right) d x   \\
		&+\int_{\Omega}\left(g_i\xi_i\overline{\eta}_i+ |\eta_i|^2(g_i^*+2v_i- c)\right) d x.
	\end{aligned}
	\]	
	Note that $\mathrm{Re}\mu_i\leqslant 0$, $\|\xi_i\|_{2}^2+\|\eta_i\|_{2}^2=1$, and $u_i$, $v_i$ are bounded. It follows from the above equality that $\mathrm{Re}\mu_i$ and $\mathrm{Im}\mu_i$ are both bounded. Thus $\{\mu_i\}_{i=1}^\infty$ is bounded.  We may assume that $\mu_i\rightarrow\mu$ and $\mathrm{Re}\mu\leqslant 0$.  By the standard regularity theory and bootstrap argument for elliptic equations, it can be derived that $\xi_i$ and $\eta_i$ are bounded in $W^{2, p} (\Omega)$ for any $p>n$. Thus, there are subsequences of $\{\xi_i\}$ and $\{\eta_i\}$, denoted by themselves, such that $\xi_i\rightarrow\xi,\eta_i\rightarrow\eta$ in $W^{1, p}(\Omega)$.
	
	Since $u_i$, $v_i$ are bounded by \eqref{ineq:uv}, let $\varepsilon_i\rightarrow 0$ in \eqref{eq:stat}, it follows that $(\mu,\xi,\eta)$ satisfies
	\begin{equation}\label{eq:12}
	\begin{cases}
		-\triangle\xi-\xi(a-2\theta_{a})=\mu\xi,  & x\in\Omega,\\
		-\triangle\eta-\eta(c-2\theta_{c})=\mu\eta,  & x\in\Omega,\\
		\xi=\eta=0,  & x\in\partial\Omega
	\end{cases}
	\end{equation}
	in weak sense. Notice that $\xi,\eta\in W^{1,p}(\Omega)\hookrightarrow C^\alpha (\overline{\Omega})$, we observe that \eqref{eq:12} holds in classical sense according to the regularity theory of elliptic equations. Therefore, $\mu$ is real and $\mu\leqslant 0$.
	
	If $\xi\not\equiv 0$, then $\mu$ is an eigenvalue of the problem
	\[
	\begin{cases}
		-\triangle\phi+(2\theta_{a}-a)\phi=\mu\phi,\ & x\in\Omega,\\
		\phi=0, & x\in\partial\Omega.
	\end{cases}
	\]
	Hence $0\geqslant\mu\geqslant\lambda_1(2\theta_{a}-a)>\lambda_1(\theta_{a}-a)=0$, which is a contradiction. Thus $\xi\equiv 0$.  Similarly, $\eta\equiv 0$.  This contradicts to the assumption that $\|\xi_i\|_{2}^2+\|\eta_i\|_{2}^2=1$. The proof of theorem is complete.
\end{proof}

\begin{thm}\label{thm:large:albe:multi}
	Suppose that $a>\lambda_1, c>\lambda_1$.

	{\rm(i)} If $a <\lambda_1(\frac{b\theta_{c}}{1+\beta\theta_{c}})$, then there exists a large positive constant $\overline{\alpha}$ such that \eqref{eq:model} has at least two positive solutions as $\alpha\geqslant\overline{\alpha}$.
	
	{\rm(ii)} If $c <\lambda_1(\frac{d\theta_{a}}{1+\alpha\theta_{a}})$, then there exists a large positive constant $\overline{\beta}$ such that \eqref{eq:model} has at least two positive solutions as $\beta\geqslant\overline{\beta}$.
\end{thm}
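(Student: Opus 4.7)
\medskip

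\noindent\textbf{Proof proposal for Theorem \ref{thm:large:albe:multi}.}

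By the symmetry between the two species, I will only plan part (i); part (ii) will follow by interchanging $(u,v)$ and the roles of $\alpha,\beta$. The approach is a standard fixed-point-index counting argument: I will exhibit a coexistence state of index $+1$ coming from Theorem \ref{thm:large:albe:li:sta}, compute the indices of the three non-negative solutions $(0,0),(\theta_a,0),(0,\theta_c)$, and then observe that uniqueness of positive solutions would violate the identity $\deg_W(I-F,D)=1$ of Lemma \ref{lem:index1}.

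First I would produce the ``stable'' coexistence state. By Theorem \ref{thm:large:albe:li:sta}, for $\alpha$ large enough \eqref{eq:model} has at least one linearly stable and non-degenerate coexistence state $(u^*,v^*)$. Since $(u^*,v^*)$ is interior to $W$ and $I-F'(u^*,v^*)$ is invertible with no eigenvalue of $F'(u^*,v^*)$ exceeding $1$ (linear stability in the parabolic sense forces all eigenvalues of $F'$ to have real part $<1$), Proposition \ref{prop:index} yields ${\rm index}_W(F,(u^*,v^*))=1$.

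Next I would compute the indices of the other fixed points. Since $a,c>\lambda_1$, Lemma \ref{lem:index2}(i) gives ${\rm index}_W(F,(0,0))=0$, and the hypothesis $a<\lambda_1\bigl(\tfrac{b\theta_c}{1+\beta\theta_c}\bigr)$ together with Lemma \ref{lem:index2}(iii) gives ${\rm index}_W(F,(0,\theta_c))=1$. For the last index I need to verify that, once $\alpha$ is taken large, $c>\lambda_1\bigl(\tfrac{d\theta_a}{1+\alpha\theta_a}\bigr)$, so that Lemma \ref{lem:index1}(ii) applies and yields ${\rm index}_W(F,(\theta_a,0))=0$. Here I would use that $\tfrac{d\theta_a}{1+\alpha\theta_a}\leqslant \tfrac{d\,\|\theta_a\|_\infty}{1+\alpha\,\|\theta_a\|_\infty}\to 0$ as $\alpha\to\infty$, hence by the continuity of the principal eigenvalue in the $L^\infty$-norm of the potential, $\lambda_1\bigl(\tfrac{d\theta_a}{1+\alpha\theta_a}\bigr)\to\lambda_1<c$, so the desired inequality holds for all $\alpha$ beyond some threshold.

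Finally I would conclude by contradiction. Enlarging $\overline\alpha$ if necessary so that all of the above hold simultaneously, suppose $(u^*,v^*)$ were the only coexistence state in $D$. Theorem \ref{thm:upper:bound} places all fixed points of $F$ in $\mathring D$, so additivity of the degree and Lemma \ref{lem:index1} give
\[
1=\deg_W(I-F,D)= {\rm index}_W(F,(0,0))+{\rm index}_W(F,(\theta_a,0))+{\rm index}_W(F,(0,\theta_c))+{\rm index}_W(F,(u^*,v^*))=0+0+1+1=2,
\]
which is absurd. Therefore a second coexistence state must exist. The only delicate point in this plan is confirming the index $+1$ for the stable solution from Theorem \ref{thm:large:albe:li:sta}—the $L^\infty$ limit of $\tfrac{d\theta_a}{1+\alpha\theta_a}$ and the resulting continuity of $\lambda_1$ are routine—but this is a standard translation between parabolic linear stability and the spectrum of the compact operator $F'(u^*,v^*)$, so the argument should close.
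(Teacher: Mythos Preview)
Your proposal is correct and follows essentially the same route as the paper: obtain a linearly stable, non-degenerate coexistence state from Theorem \ref{thm:large:albe:li:sta}, show its index is $+1$ via Proposition \ref{prop:index} (using $\overline{W}_{(u^*,v^*)}=E=S_{(u^*,v^*)}$), and reach a contradiction with $\deg_W(I-F,D)=1$ by summing the indices at $(0,0)$, $(\theta_a,0)$, $(0,\theta_c)$. In fact, your plan is more explicit than the paper's on one point: the paper simply invokes Lemma \ref{lem:index1} to get ${\rm index}_W(F,(\theta_a,0))=0$ without commenting on why $c>\lambda_1\bigl(\tfrac{d\theta_a}{1+\alpha\theta_a}\bigr)$ holds, whereas you correctly observe that $\tfrac{d\theta_a}{1+\alpha\theta_a}\to 0$ in $L^\infty$ as $\alpha\to\infty$ and use continuity of the principal eigenvalue to force this inequality for large $\alpha$.
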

\begin{proof}
	By the similarities of two competitive species, we only need to prove (i). By Theorem \ref{thm:large:albe:li:sta}, \eqref{eq:model} has at least one positive solution $(\tilde{u},\tilde{v})$ which is linearly stable and non-degenerate if $\alpha$ is sufficiently large. This implies that the operator $I- F'(\tilde{u},\tilde{v})$ is invertible on $\overline{W}_{(\tilde{u},\tilde{v})}$ and $F'(\tilde{u},\tilde{v})$ has no real eigenvalue which is greater than one. Note that $W_{(\tilde{u},\tilde{v})}=E=S_{(\tilde{u},\tilde{v})}$. It can be proved that $F'_{(\tilde{u},\tilde{v})}$ does not have \textit{property $\alpha$} and $\mathrm{index}_W(F, (\tilde{u},\tilde{v}))=1$ by Proposition \ref{prop:index}.

	Suppose $ \eqref{eq:model}$ admits only one coexistence state $(\tilde{u},\tilde{v})$, we can apply Lemma \ref{lem:index1}, Lemma \ref{lem:index2} (iii) and the additivity of the fixed point indices, to deduce that
	\begin{eqnarray*}
		1&=&\deg_W (I-F,D)  \\
		&= &{\rm index}_W(F,(0,0))+{\rm index}_W(F,(\theta_{a},0))\\
		&&+\;{\rm index}_W(F,(0,\theta_{c}))+{\rm index}_W(F,(\tilde{u},\tilde{v}))  \\
		&= &0+0+1+1.
	\end{eqnarray*}
	This is a contradiction and the theorem is proved.
\end{proof}

\section{Bifurcation, instability and multiplicity of coexistence states}\label{sec:bif}

In this section, we discuss the bifurcation of positive solutions by using respectively $ a $ and $ c $ as the main bifurcation parameters, and study the multiplicity and stability of coexistence states. Given an operator $F$, we use $\mathcal{N}(F)$ (or $\mathcal{N}F$) and $\mathcal{R}(F)$ (or $\mathcal{R}F$) to denote the kernel and range of $F$, respectively. We first introduce some notations which will be used in Theorem \ref{thm:bif} to describe the bifurcations.

Firstly, we regard $ a $ as a bifurcation parameter and suppose that all other parameters are fixed. If $ c >\lambda_1$, it is obvious that the problem \eqref{eq:model} has semi-trivial non-negative solutions: $\{(0,\theta_{c}, a): a\in\mathbb{R}\}$. By linearizing \eqref{eq:model} at $ (0,\theta_{c})$, we obtain the following eigenvalue problem:
\begin{equation}\label{eq:bifur}
\begin{cases}
	-\triangle\xi + \displaystyle \frac{b\theta_{c}}{1+\beta\theta_{c}}\xi-a\xi= \mu\xi,  & x\in\Omega,\\[4mm]
	-\triangle\eta +\displaystyle\frac{d\theta_{c}}{1+\beta\theta_{c}}\xi+2\theta_{c}\eta-c\eta=\mu\eta,\  & x\in\Omega, \\[1mm]
	\xi=\eta=0, & x\in\partial\Omega.
\end{cases}
\end{equation}
If $\mu=0$ is the principle eigenvalue of \eqref{eq:bifur}, which occurs at
\[
	a=a_0 :=\lambda_1(\frac{b\theta_{c}}{1+\beta\theta_{c}}),
\]
we will prove that $(0,\theta_{c},a_0)$ is a bifurcation point in Theorem \ref{thm:bif}.

Let $\Phi_a$ be the unique positive solution of
\begin{equation}\label{eq:Phia}
\begin{cases}
	-\triangle\Phi_a +\displaystyle\frac{b\theta_{c}}{1+\beta\theta_{c}}\Phi_a=  a_0\Phi_a, \ & x\in\Omega,\\
	\Phi_a=0, & x\in\partial\Omega, \\[2mm]
	\int_\Omega\Phi_a^2=1.
\end{cases}
\end{equation}
Since
\[
	\lambda_1(2\theta_{c}-c) >\lambda_1(\theta_{c}-c)=0,
\]
$-\triangle+2\theta_{c} -c$ is invertible, and $(-\triangle+2\theta_{c} -c)^{-1}$ maps positive functions to positive functions because of the maximum principle.  Define
\begin{equation}\label{eq:Psia}
	\Psi_a := d (-\triangle+2\theta_{c} -c)^{-1}\left(\frac{-\theta_{c}}{1+ \beta\theta_{c}}\Phi_a\right),
\end{equation}
then $\Phi_a>0$, $\Psi_a<0$ in $\Omega$ and $(\Phi_a, \Psi_a)$ satisfies \eqref{eq:bifur} with $\mu=0$, $a=a_0$.

Similar to the above argument, we can regard $c$ as the bifurcation parameter and suppose that all other parameters are fixed. Let
\[
	c_0:=\lambda_1(\frac{d\theta_{a}}{1+\alpha\theta_{a}})
\]
and $\Psi_c$ be the unique positive solution of
\begin{equation}\label{eq:Psic}
\begin{cases}
	\displaystyle -\triangle\Psi_c+\frac{d\theta_{a}}{1+\alpha\theta_{a}}\Psi_c=  c_0\Psi_c,\  & x\in\Omega,\\
	\Psi_c=0, & x\in\partial\Omega, \\[2mm]
	\int_\Omega\Psi_c^2=1.
\end{cases}
\end{equation}
Define
\begin{equation}\label{eq:Phic}
	\Phi_c := b (-\triangle+2\theta_{a} -a)^{-1}\left(\frac{-\theta_{a}}{1+ \alpha\theta_{a}}\Psi_c\right).
\end{equation}
It can be proved that $(-\triangle+2\theta_{a} -a)^{-1}$ is invertible, $\Phi_c<0$, $\Psi_c>0$ in $\Omega$ and $(\Phi_c,\Psi_c)$ satisfies \eqref{eq:bifur} with $\mu=0$, $c=c_0$.

With the constants $a_0, c_0$ and functions $\Phi_a,\Psi_a,\Phi_c,\Psi_c$ defined above, we have the following results regarding the bifurcation of positive solutions of \eqref{eq:model} from $(0,\theta_{c}, a_0)$ and $(\theta_{a}, 0, c_0)$, respectively.

\begin{thm}\label{thm:bif}
	{\rm(i)}\, Assume $ c >\lambda_1$ and $ a=a_0$. Then $(0,\theta_{c},a_0)$ is a bifurcation point of positive solution of \eqref{eq:model}.  Moreover, for $0<s\ll 1$, the bifurcating positive solution $(u(s), v(s), a(s))$ of \eqref{eq:model} emanating from $(0,\theta_{c},a_0)$ takes the form
	\begin{equation}\label{eq:bif:co:sol}
	\begin{cases}
		u(s)= s\Phi_a+O(s^2),\\
		v(s)=\theta_{c}+s\Psi_a+O(s^2),\\
		a(s)= a_0+a_1 s+O(s^2),
	\end{cases}
	\end{equation}
	where $\Phi_a,\Psi_a$ are defined by \eqref{eq:Phia}, \eqref{eq:Psia}, and
	\begin{equation}\label{eq:a_1}
		a_1=\int_\Omega \Phi_{a}^3 d x +b\int_\Omega\frac{1}{(1+\beta\theta_{c})^2}\Phi_{a}^2\Psi_a d x -b\alpha\int_\Omega\frac{\theta_{c}}{1+\beta\theta_{c}}\Phi_{a}^3 d x .
	\end{equation}
	
	{\rm(ii)}\, Assume $ a >\lambda_1 $ and $ c=c_0$. Then $(\theta_{a}, 0, c_0)$ is a bifurcation point of positive solution of \eqref{eq:model}.  Moreover, for $0<s\ll 1$, the bifurcating positive solution $(u(s), v(s), c(s))$ of \eqref{eq:model} emanating from $(\theta_{a}, 0, c_0)$ takes the form
	\[
	\begin{cases}
		u(s)=\theta_{a}+s\Phi_c+O(s^2),\\
		v(s)= s\Psi_c+O(s^2),\\
		c(s)= c_0+c_1 s+O(s^2),
	\end{cases}
	\]
	where $\Phi_c,\Psi_c$ are defined by \eqref{eq:Psic}, \eqref{eq:Phic}, and
	\begin{equation}\label{eq:c_1}
		c_1=\int_\Omega \Psi_{c}^3 d x +d\int_\Omega\frac{1}{(1+\alpha\theta_{a})^2}\Psi_{c}^2\Phi_c d x -d\beta\int_\Omega\frac{\theta_{a}}{1+\alpha\theta_{a}}\Psi_{c}^3 d x .
	\end{equation}
\end{thm}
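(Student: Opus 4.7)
The strategy is to apply the Crandall--Rabinowitz theorem on bifurcation from a simple eigenvalue to the smooth map
\[
G(u,v,a)=\begin{pmatrix}\triangle u+u\bigl(a-u-bvf(u,v)\bigr)\\[2pt] \triangle v+v\bigl(c-v-duf(u,v)\bigr)\end{pmatrix},
\]
regarded as $G:X\times X\times\mathbb R\to Y\times Y$ with $X=\{w\in C^{2,\gamma}(\overline\Omega):w|_{\partial\Omega}=0\}$ and $Y=C^\gamma(\overline\Omega)$. Observe that $G(0,\theta_{c},a)\equiv 0$ for all $a$, so $\{(0,\theta_{c},a):a\in\mathbb R\}$ is the semi-trivial branch from which to seek bifurcation. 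Because the system is invariant under the swap $(u,v,a,b,\alpha)\leftrightarrow(v,u,c,d,\beta)$, part (ii) follows from part (i) verbatim, so I only treat part (i).

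Set $L:=G_{(u,v)}(0,\theta_{c},a_0)$; this is precisely the linear operator on the left-hand side of (\ref{eq:bifur}) with $\mu=0$. Three conditions must be checked. First, $\mathcal N(L)=\mathrm{span}\{(\Phi_a,\Psi_a)\}$: the first row of $L(\xi,\eta)=0$ makes $\xi$ a principal eigenfunction of $-\triangle+\frac{b\theta_{c}}{1+\beta\theta_{c}}$, hence a scalar multiple of $\Phi_a$, and then the second row $(-\triangle+2\theta_{c}-c)\eta=-\frac{d\theta_{c}}{1+\beta\theta_{c}}\xi$ is uniquely solvable because $\lambda_1(2\theta_{c}-c)>\lambda_1(\theta_{c}-c)=0$, giving $\eta=\Psi_a$. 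Second, $L$ is a compact perturbation of $\triangle\oplus\triangle$, hence Fredholm of index zero with $\mathrm{codim}\,\mathcal R(L)=1$; the adjoint equation forces $\eta^{*}\equiv0$ by the same spectral gap and then $\xi^{*}=\Phi_a$, so $\mathcal N(L^{*})=\mathrm{span}\{(\Phi_a,0)\}$. Third, since $a$ enters only in the first slot, one has $G_{(u,v),a}(0,\theta_{c},a_0)[\Phi_a,\Psi_a]=(\Phi_a,0)^{\top}$, whose pairing with the cokernel generator equals $\int_{\Omega}\Phi_a^{2}=1\neq 0$, so the transversality condition holds. Crandall--Rabinowitz then provides the smooth curve $(u(s),v(s),a(s))$ of nontrivial solutions emanating from $(0,\theta_{c},a_0)$ with tangent $(\Phi_a,\Psi_a,a_1)$, and positivity for $0<s\ll 1$ follows from $\Phi_a>0$ in $\Omega$ together with $\partial\Phi_a/\partial\nu<0$ on $\partial\Omega$ and the dominance of $\theta_{c}>0$ over $s\Psi_a$ (Lemma \ref{lem:convexity}).

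To pin down $a_1$, I would substitute the ansatz $u(s)=s\Phi_a+s^{2}u_2+o(s^{2})$, $v(s)=\theta_{c}+s\Psi_a+s^{2}v_2+o(s^{2})$, $a(s)=a_0+a_1s+o(s)$ into the first equation of (\ref{eq:model}), expand
\[
bvf(u,v)=\frac{b\theta_{c}}{1+\beta\theta_{c}}+s\!\left(\frac{b\Psi_a}{(1+\beta\theta_{c})^{2}}-\frac{b\alpha\theta_{c}\Phi_a}{1+\beta\theta_{c}}\right)+O(s^{2}),
\]
and equate $s^{2}$ terms. This produces $\bigl(-\triangle+\frac{b\theta_{c}}{1+\beta\theta_{c}}-a_0\bigr)u_2=h$ with
\[
h=a_1\Phi_a-\Phi_a^{2}-\frac{b\Phi_a\Psi_a}{(1+\beta\theta_{c})^{2}}+\frac{b\alpha\theta_{c}\Phi_a^{2}}{1+\beta\theta_{c}};
\]
the Fredholm solvability condition $\int_{\Omega}h\Phi_a\,dx=0$, combined with $\int_{\Omega}\Phi_a^{2}=1$, is exactly (\ref{eq:a_1}). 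The main technical hurdle in this plan is the clean identification of $\mathcal N(L^{*})$ and the Fredholm structure of the $2\times 2$ coupled system, as this simultaneously drives the codimension count and the transversality check; once Crandall--Rabinowitz applies, the derivation of $a_1$ is routine Taylor bookkeeping with $f(u,v)$.
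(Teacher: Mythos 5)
Your proposal is correct and follows essentially the same route as the paper: the same Crandall--Rabinowitz verification (one-dimensional kernel spanned by $(\Phi_a,\Psi_a)$, codimension-one range characterized by orthogonality to $(\Phi_a,0)$, and the transversality condition $F_{(u,v),a}(0,\theta_c,a_0)(\Phi_a,\Psi_a)=(\Phi_a,0)\notin\mathcal{R}$), with part (ii) by symmetry. Your derivation of $a_1$ via the order-$s^2$ solvability condition is the same computation the paper performs by multiplying the first equation of \eqref{eq:model} by $\Phi_a$ and integrating by parts, and it reproduces \eqref{eq:a_1} exactly.
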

\begin{proof}
	Due to the similarities of two competitive species, we only need to prove (i). Define an operator $F : E\times\mathbb{R}\rightarrow E$ by
	\[
		F(u,v,a) =
		\begin{pmatrix}
			\triangle u+u (a-u-b v f(u,v))\\
			\triangle v+v (c-v-d u f(u,v))
		\end{pmatrix}.
	\]
	It is obvious that $F_a(0,\theta_{c}, a_0)=0$. For any $(\xi,\eta)\in E$, a direct calculation yields
	\[
		F_{(u,v)}(u,v,a)
		\begin{pmatrix}
		\xi\\
		\eta
		\end{pmatrix}
		=
		\begin{pmatrix}
			\displaystyle \triangle\xi+ (a-2u-\frac{b v f(u,v)}{1+\alpha u})\xi-\frac{bu f(u,v)}{1+\beta v}\eta\\
			\displaystyle \triangle\eta-\frac{dv f(u,v)}{1+\alpha u}\xi+(c-2v-\frac{d u f(u,v)}{1+\beta v})\eta 	
		\end{pmatrix}.
	\]
	Hence,
	\[
		F_{(u,v)}(0,\theta_{c},a_0)
		\begin{pmatrix}
			\xi\\
			\eta
		\end{pmatrix}
		=
		\begin{pmatrix}
			\displaystyle \triangle\xi+ (a_0 -\frac{b\theta_{c}}{1+\beta\theta_{c}})\xi\\
			\displaystyle \triangle\eta-\frac{d\theta_{c}}{1+\beta\theta_{c}}\xi+(c- 2\theta_{c})\eta 	
		\end{pmatrix}.
	\]
	
	{\it Step 1\;} We shall prove that
	\begin{equation}\label{eq:kernel}
		\mathrm{dim}(\mathcal{N}F_{(u,v)}(0,\theta_{c},a_0))=1,\qquad\mathcal{N}F_{(u,v)}(0,\theta_{c}, a_0) =\mathrm{span}\{(\Phi_a,\Psi_a)\}.
	\end{equation}
	In fact, if there exists $(\xi,\eta)\in E$ such that $F_{(u,v)}(0,\theta_{c}, a_0)(\xi,\eta)=(0, 0)$, then
	\[\begin{cases}
		\displaystyle \triangle\xi +(a_0-\frac{b\theta_{c}}{1+\beta\theta_{c}})\xi=0,  \quad  & x\in\Omega,\\
		\displaystyle \triangle\eta-\frac{d\theta_{c}}{1+\beta\theta_{c}}\xi+(c-2\theta_{c})\eta=0,   & x\in\Omega,\\
		\xi=\eta=0,  & x\in\partial\Omega.
	\end{cases}
	\]
	It follows from the first line and $a_0 =\lambda_1(\frac{b\theta_{c}}{1+\beta\theta_{c}})$ that $\xi\in\mathrm{span}\{\Phi_a\}$, i.e. $\xi= k\Phi_a $ for some constant $k\in \mathbb{R}$.
	Since the operator $\triangle+c-2\theta_{c}$ is invertible, we have
	\[
		\eta=(-\triangle+2\theta_{c}-c)^{-1}\left(\frac{-d\theta_{c}}{1+\beta\theta_{c}}\xi\right) = k\Psi_a.
	\]
	Therefore, $(\xi,\eta)\in\mathrm{span}\{(\Phi_a,\Psi_a)\}$.
	
	{\it Step 2\;} To prove that $\mathrm{Codim} (\mathcal{R}F_{(u,v)}(0,\theta_{c}, a_0))= 1$.
	
	In fact, if $(\xi^*,\eta^*)\in\mathcal{R}F_{(u,v)} (0,\theta_{c}, a_0)$, then there exists $(\xi,\eta)\in E$ such that
	\begin{equation}\label{eq:bf}
	\begin{cases}
		\displaystyle \triangle\xi+ (a_0 -\frac{b\theta_{c}}{1+\beta\theta_{c}})\xi=\xi^*,\quad & x\in\Omega,\\
		\displaystyle \triangle\eta-\frac{d\theta_{c}}{1+\beta\theta_{c}}\xi+(c-2\theta_{c})\eta=\eta^*, &  x\in\Omega,\\
		\xi=\eta=0, & x\in\partial\Omega.
	\end{cases}
	\end{equation}
	As $\Phi_a$ being the unique positive solution of \eqref{eq:Phia}, we have
	\[
		\int_{\Omega}\Phi_a\xi^* d x  =0,
	\]
	and thus $(\xi^*,\eta^*)$ is orthogonal to $(\Phi_a, 0)$.
	
	Conversely, if $(\xi^*,\eta^*)$ is orthogonal to $(\Phi_a, 0)$, then the first equation of \eqref{eq:bf}
	has a solution $\xi$. Therefore, the second equation of \eqref{eq:bf} admits a
	solution $\eta$ since $\triangle+ c-2\theta_{c}$ is invertible. Thus $(\xi^*,\eta^*)\in\mathcal{R}F_{(u,v)}(0,\theta_{c},a_0)$ and $\mathrm{Codim} (\mathcal{R}F_{(u,v)}(0,\theta_{c}, a_0))= 1$.
	
	{\it Step 3\;} Since $\mathcal{R}F_{(u,v)}(0,\theta_{c}, a_0)$ is orthogonal to $(\Phi_a, 0)$, we have
	\[
		F_{(u,v),a} (0,\theta_{c},a_0)(\Phi_a,	\Psi_a)=(\Phi_a, 0) \not\in\mathcal{R}F_{(u,v)}(0,\theta_{c}, a_0).
	\]
	Applying the bifurcation theorem in \cite{CR}, we arrive at the desired conclusions of (i). Actually, by \eqref{eq:kernel}, $(u(s),v(s),a(s))$ can be expressed in the form \eqref{eq:bif:co:sol}. From the first line of \eqref{eq:model}, we have
	\[
		-\int\Phi_a\triangle u d x =\int\Phi_a u (a-u-bvf(u,v)) d x .
	\]
	On the other hand, integrating the left hand side by parts, we have
	\[
		-\int\Phi_a\triangle u d x  =-\int u\triangle\Phi_a d x =\int u (a_0-\frac{b\theta_{c}}{1+\beta\theta_{c}})\Phi_a d x .
	\]
	Combining the above two equations, we have
	\begin{equation}\label{eq:a_1:temp}
		\int\Phi_a u (a-u-bvf(u,v)) d x =\int u (a_0-\frac{b\theta_{c}}{1+\beta\theta_{c}})\Phi_a d x .
	\end{equation}
	Equation \eqref{eq:a_1} can then be achieved by substituting \eqref{eq:bif:co:sol} into \eqref{eq:a_1:temp}.
	Part (ii) can be proved similarly.
\end{proof}

By Theorem \ref{thm:bif}, we know that $(0,\theta_{c}, a_0)$ and $(\theta_{a}, 0, c_0)$ are bifurcation points of coexistence states for any $b, d > 0$. The following Theorem \ref{thm:nondeg} and Theorem \ref{thm:nondeg2} discuss the stability and multiplicity of the coexistence states, which  bifurcates from $(0,\theta_{c}, a_0)$ (or $(\theta_{a},0, c_0)$), when $0<s,d\ll 1$ (or $0<s,b\ll 1$).

\begin{thm}\label{thm:nondeg}
	Let $c >\lambda_1$ and $\int_\Omega\Phi_{a}^3\left(1-\frac{\alpha b\theta_{c}}{1+\beta\theta_{c}}\right) d x \not=0$. If $0<s,d\ll 1$, then the coexistence state $(u(s), v(s), a(s))$ bifurcating from $(0,\theta_{c},a_0)$ is non-degenerate.  If $\int_\Omega\Phi_{a}^3\left(1-\frac{\alpha b\theta_{c}}{1+\beta\theta_{c}}\right) > 0$, then $(u(s), v(s))$ is linearly stable; If $\int_\Omega\Phi_{a}^3\left(1-\frac{\alpha b\theta_{c}}{1+\beta\theta_{c}}\right)< 0$, then it is linearly unstable.
	
	Moreover, if $a_1$ in \eqref{eq:a_1} is negative, then \eqref{eq:model} has at least two coexistence states for $0<d\ll 1$, $a<a_0$ and near $a_0$.
\end{thm}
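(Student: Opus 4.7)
The plan is to split the theorem into two parts. For the non-degeneracy and stability dichotomy I will apply the Crandall--Rabinowitz exchange-of-stability principle along the local bifurcation branch provided by Theorem~\ref{thm:bif}, combined with a small-$d$ expansion of $a_1$. For the multiplicity I will compute the fixed-point index of the bifurcating (unstable) coexistence state and then invoke the additivity of degree to force the existence of a second coexistence state.

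For the stability dichotomy, linearize \eqref{eq:model} along the semi-trivial branch $\{(0,\theta_{c},a)\}$: the linearization is block lower-triangular with upper-diagonal block $-\triangle+\frac{b\theta_{c}}{1+\beta\theta_{c}}-a$ and lower-diagonal block $-\triangle+2\theta_{c}-c$. At $a=a_0$ the upper block contributes a simple eigenvalue $\mu_0(a_0)=0$ with $\mu_0(a)=a_0-a$, hence $\mu_0'(a_0)=-1\ne 0$, while the lower block's spectrum is bounded below by $\lambda_1(2\theta_{c}-c)>\lambda_1(\theta_{c}-c)=0$. The classical Crandall--Rabinowitz stability theorem then gives, for small $s>0$, the asymptotic
\[
  \mu^*(s) \;\sim\; -s\,a'(s)\,\mu_0'(a_0) \;=\; s\,a'(s),
\]
for the perturbed real eigenvalue $\mu^*(s)$ at $(u(s),v(s))$, while all remaining eigenvalues stay strictly positive by spectral continuity. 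Hence $(u(s),v(s))$ is non-degenerate iff $a_1\ne 0$, and linearly stable (resp.\ unstable) iff $a_1>0$ (resp.\ $a_1<0$). To connect with the displayed integral, note that \eqref{eq:Psia} makes $\Psi_a$ linear in $d$, so the middle term of \eqref{eq:a_1} is $O(d)$ while the first and third terms are $d$-independent; letting $d\to 0$ yields
\[
  a_1 \;\longrightarrow\; \int_\Omega \Phi_{a}^{3}\left(1-\frac{\alpha b\theta_{c}}{1+\beta\theta_{c}}\right)dx,
\]
so the nonvanishing hypothesis on this integral produces $a_1\ne 0$ with matching sign for all sufficiently small $d$, completing this part.

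For the multiplicity, assume now $a_1<0$ and fix $d$ small. Then $a(s)<a_0$ for $s>0$ near $0$, and the bifurcating state $(u(s),v(s))\in\mathring{D}$ is linearly unstable by the above. Being interior to $W$, it has $\overline{W}_{(u(s),v(s))}=E=S_{(u(s),v(s))}$, so property $\alpha$ is vacuous and Proposition~\ref{prop:index}(ii) gives its index as $(-1)^\sigma$, where $\sigma$ counts eigenvalues of $F'(u(s),v(s))$ exceeding $1$. Since $F'(0,\theta_{c},a_0)$ is itself block triangular with simple principal eigenvalue $1$ in the upper block (by Proposition~\ref{prop:spectrum} and $\lambda_1(\frac{b\theta_{c}}{1+\beta\theta_{c}}-a_0)=0$) and all other eigenvalues strictly below $1$, perturbation combined with the stability conclusion $\mu^*(s)<0$ forces exactly one eigenvalue above $1$, so $\sigma=1$ and the index is $-1$. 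For $d$ small, $c_0=\lambda_1(\frac{d\theta_{a}}{1+\alpha\theta_{a}})\to\lambda_1<c$, so $c>c_0$ and Lemmas \ref{lem:index1}--\ref{lem:index2} give indices $0,0,1$ for $(0,0),(\theta_{a},0),(0,\theta_{c})$ respectively (the last using $a<a_0$). If the bifurcating state were the only coexistence state, additivity would yield $1=\deg_W(I-F,D)=0+0+1+(-1)=0$, a contradiction; hence a second coexistence state must exist. The principal technical hurdle is the spectral-perturbation step in the index calculation: one must confirm that only the critical eigenvalue crosses $1$, while all other eigenvalues of $F'(u(s),v(s))$ remain strictly below $1$. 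This is guaranteed by the simplicity shown in \eqref{eq:kernel} and the spectral gap produced by $\lambda_1(2\theta_{c}-c)>0$; the rest is standard index bookkeeping.
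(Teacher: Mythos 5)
Your proof is correct and reaches the same conclusions, but the non-degeneracy/stability part goes by a genuinely different route from the paper's. The paper never invokes the Crandall--Rabinowitz exchange-of-stability principle: it multiplies the critical eigenfunction equation by $u$, multiplies the $u$-equation of \eqref{eq:model} by $\xi$, combines the two integral identities, and computes $\lim_{s,d\to 0^+}\mu/s=\int_\Omega\Phi_{a}^3\bigl(1-\frac{\alpha b\theta_{c}}{1+\beta\theta_{c}}\bigr)dx$ directly, so the sign of the critical eigenvalue is read off from the integral without ever passing through $a_1$. You instead obtain $\mu^*(s)\sim s\,a'(s)$ from the exchange-of-stability theorem (legitimate here, since Theorem \ref{thm:bif} supplies the simplicity and transversality it requires) and then use the fact that $\Psi_a$ in \eqref{eq:Psia} is proportional to $d$ to get $a_1\to\int_\Omega\Phi_{a}^3\bigl(1-\frac{\alpha b\theta_{c}}{1+\beta\theta_{c}}\bigr)dx$ as $d\to 0$. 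Your route buys the extra information that, for each fixed small $d$, stability is governed by the sign of $a_1$ itself; the paper's computation is more elementary and self-contained. For the multiplicity, your degree count coincides with the paper's, except that you take the extra step of pinning down $\sigma=1$ and hence index $-1$ for the bifurcating state; the paper only needs ${\rm index}_W(F,(\hat u,\hat v))=\pm 1$ (property $\alpha$ being vacuous since $\overline{W}_{(\hat u,\hat v)}=E=S_{(\hat u,\hat v)}$), because $1=0+0+1\pm 1$ is already a contradiction either way. Your identification of $\sigma$ rests on an asserted but unproved equivalence, for the coupled system, between the eigenvalue of $F'(u(s),v(s))$ crossing $1$ and $\mu^*(s)$ crossing $0$; since it is not needed, this does not affect correctness. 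You are in fact slightly more careful than the paper in recording that ${\rm index}_W(F,(\theta_{a},0))=0$ requires $c>c_0=\lambda_1\bigl(\frac{d\theta_{a}}{1+\alpha\theta_{a}}\bigr)$, which is precisely where the smallness of $d$ enters the index bookkeeping.
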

\begin{proof}
	For convenience, simply denote $a(s)=a$ and $(u(s),v(s))=(u,v)$. Then, the corresponding linearized problem at $(u,v)$ can be written as
	\[
		\mathcal{L}(s,d)(\xi,\eta)=\mu (s,d) (\xi,\eta),
	\]
	where
	\[
		\mathcal{L}(s,d)=
		\begin{pmatrix}
		\displaystyle -\triangle-a+2u+\frac{bv f(u,v)}{1+\alpha u} &\displaystyle \frac{bu f(u,v)}{1+\beta v}\\
		\displaystyle \frac{dv f(u,v)}{1+\alpha u} &\displaystyle -\triangle-c+2v+\frac{du f(u,v)}{1+\beta v}
		\end{pmatrix}.
	\]
	As $s, d\rightarrow 0^+$, it is easy to see that
	\[
		\lim_{s\rightarrow 0^+}\mathcal{L}(s,d) =
		\begin{pmatrix}
			\displaystyle -\triangle-a_0+\frac{b\theta_{c}}{1+\beta\theta_{c}}   &  0\\
			0 &  -\triangle-c+2\theta_{c}
		\end{pmatrix}=:\mathcal{L}_0.
	\]
	Because $ a_0=\lambda_1(\frac{b\theta_{c}}{1+\beta\theta_{c}})$ and $\lambda_1(2\theta_{c}-c) >\lambda_1(\theta_{c}-c)=0$, we know that $0$ is the first eigenvalue of $\mathcal{L}_0$ with the corresponding eigenfunction $(\Phi_a, 0)$.  Moreover, all the real parts of the other eigenvalues of $\mathcal{L}_0$ are positive and apart from 0.  According to the perturbation theory of linear operators \cite{Km, Kt}, it can be proved that, when $0 < s,d\ll 1$, $\mathcal{L}(s,d)$ has a unique eigenvalue $\mu(s,d)$ satisfying $\lim_{s,d\rightarrow 0^+}\mu (s,d)=0$ and all other eigenvalues of $\mathcal{L}(s,d)$ have positive real parts and apart from 0.  In the following, we shall simply denote $\mathcal{L}(s,d)=\mathcal{L}$ and $\mu (s,d)=\mu$.
	
	To determine the sign of $\mathrm{Re}\mu$ for $0 < s,d\ll 1$, we set $(\xi,\eta)$ be the corresponding eigenfunction to $\mu$ such that $(\xi,\eta)\rightarrow (\Phi_a, 0)$ as $s,d\rightarrow 0^+$. Then $(\xi,\eta)$ satisfy
	\begin{equation}\label{eq:bf:sta}
	\begin{cases}
		\displaystyle -\triangle\xi- \left( a-2u-\frac{b v f(u,v)}{1+\alpha u} \right)\xi+\frac{b u f(u,v)}{1+\beta v}\eta=\mu\xi,\quad  & x\in\Omega,\\
		\displaystyle -\triangle\eta+\frac{dv f(u,v)}{1+\alpha u}\xi- \left( c-2v-\frac{d u f(u,v)}{1+\beta v} \right)\eta=\mu\eta,   & x\in\Omega,\\
		\xi=\eta=0,   & x\in\partial\Omega.
	\end{cases}
	\end{equation}
	Multiplying the first equation of \eqref{eq:bf:sta} by $ u $ and integrating over $\Omega $, we obtain
	\[
		-\int_\Omega u\triangle\xi d x -\int_\Omega (a-2u-\frac{b v f(u,v)}{1+\alpha u}) u\xi d x +\int_\Omega\frac{b u^2 f(u,v)}{1+\beta v}\eta d x =\mu\int_\Omega u\xi d x .
	\]
	On the other hand, multiplying the first equation of \eqref{eq:model} by $\xi$, and integrating over $\Omega $, we obtain
	\[
		-\int_\Omega u\triangle\xi d x  =-\int_\Omega\xi\triangle u d x =\int_\Omega\xi u (a-u-b v f(u,v)) d x .	
	\]
	By combining the above two equations, it yields
	\begin{equation}\label{ineq:sta}
		\mu\int_\Omega u\xi d x =\int_\Omega\xi u^2 (1-\frac{b v\alpha f(u,v)}{1+\alpha u}) d x +\int_\Omega\frac{b u^2 f(u,v)}{1+\beta v}\eta d x .
	\end{equation}
	Noting that $(u, v)=(s\Phi_a+O(s^2),\theta_{c}+s\Psi_a+O(s^2))$ and $(\xi,\eta)\rightarrow (\Phi_a, 0)$ as $s,d\rightarrow 0^+$.  Divide \eqref{ineq:sta} by $s^2$ and let $s, d\rightarrow 0^+$, it is deduced that
	\begin{equation}\label{eq:lim}
		\lim_{s,d\rightarrow 0^+}\frac{\mu}{s}
		=\int_\Omega\Phi_{a}^3\left(1-\frac{\alpha b\theta_{c}}{1+\beta\theta_{c}}\right) d x
		\neq 0.
	\end{equation}
	Therefore $\mathrm{Re}\mu\neq 0$ for $s,d$ sufficiently small.  Because all other eigenvalues of $\mathcal{L}$ have positive real parts and apart from 0, the coexistence state bifurcating from $(0,\theta_{c}, a_0)$ is non-degenerate.
	
	We have proved in the above that, when $ 0< s,d\ll 1 $, the eigenvalues of $\mathcal{L}$ have positive real parts and are apart from $0$ except for $\mu (s,d)$.  Thus, the linear stability of the bifurcation coexistence state $(u(s), v(s))$ is determined completely by the sign of the real part of $\mu (s,d)$.  From the limit \eqref{eq:lim}, we see that the real part of $\mu (s,d)$ and $\int_\Omega\Phi_{a}^3(1 -\frac{\alpha b\theta_{c}}{1+\beta\theta_{c}}) d x  $ have the same sign for $0 < s,d\ll 1$. This completes the first assertion of the theorem.
	
	When $a_1<0$, we assume the contrary that \eqref{eq:model} has only one coexistence state $(\hat{u},\hat{v})$ for $0<s,d\ll 1$. From the first part of the proof, $(\hat{u},\hat{v})$ must be the coexistence state bifurcating from $(0,\theta_{c},a_0)$, i.e. $(\hat{u},\hat{v})=(u(s),v(s))$, which is non-degenerate, and $a=a_0+a_1 s+O(s^2)$. Since $a_1<0$, thus $a<a_0$ for $0<s\ll 1$. Therefore, the operator
	\[
		I-F_{(u,v)}(\hat{u},\hat{v}):\overline{W}_{(\hat{u},\hat{v})}\to  \overline{W}_{(\hat{u},\hat{v})}
	\]
	is invertible on $\overline{W}_{(\hat{u},\hat{v})}$. Since $\overline{W}_{(\hat{u},\hat{v})}=E=S_{(\hat{u},\hat{v})}, F_{(u,v)}(\hat{u},\hat{v})$ does not have \textit{property $\alpha$} on $\overline{W}_{(\hat{u},\hat{v})}$. Hence, ${\rm index}_W (F,(\hat{u},\hat{v}))=\pm 1$ by Proposition \ref{prop:index}.
	
	Since $c >\lambda_1$ and $\lambda_1 < a < a_0$ for $s\ll 1$, we have
	 \[
		\begin{aligned}
		1=&\deg_W (I-F,D)\\
		= &{\rm index}_W(F,(0,0))+{\rm index}_W(F,(\theta_{a},0))\\
		&+{\rm index}_W(F,(0,\theta_{c}))+{\rm index}_W(F,(\hat{u},\hat{v}))\\
		=& 0+0+1\pm 1
		\end{aligned}
	\]
	by Lemma \ref{lem:index1} and \ref{lem:index2}. This is a contradiction. Therefore, there exist at least two coexistence sates of \eqref{eq:model}.
\end{proof}

Similarly, we have the following theorem concerning the stability and multiplicity of coexistence states which bifurcates from $(\theta_{a}, 0, c_0)$.

\begin{thm}\label{thm:nondeg2}
	Assume $a >\lambda_1$ and $\int_\Omega\Psi_{c}^3\left(1-\frac{\beta d\theta_{a}}{1+\alpha\theta_{a}}\right) d x \not=0$. If $0<s, b\ll 1$, then the coexistence state $(u(s), v(s), c(s))$ bifurcating from $(\theta_{a}, 0, c_0)$ is non-degenerate. If $\int_\Omega\Psi_{c}^3\left(1-\frac{\beta d\theta_{a}}{1+\alpha\theta_{a}}\right) > 0$, then $(u(s), v(s))$ is linearly stable; If $\int_\Omega\Psi_{c}^3\left(1-\frac{\beta d\theta_{a}}{1+\alpha\theta_{a}}\right) < 0$, then it is linearly unstable.
	
	Moreover, if $c_1$ in \eqref{eq:c_1} is negative, then \eqref{eq:model} has at least two coexistence states for $0<b\ll 1$, $c<c_0$ and near $c_0$.
\end{thm}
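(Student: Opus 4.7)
The plan is to mirror the proof of Theorem \ref{thm:nondeg} under the symmetry exchanging the two species: $u\leftrightarrow v$, $a\leftrightarrow c$, $b\leftrightarrow d$, $\alpha\leftrightarrow\beta$, $(\Phi_a,\Psi_a)\leftrightarrow(\Psi_c,\Phi_c)$, and $(0,\theta_c,a_0)\leftrightarrow(\theta_a,0,c_0)$. First I would write the linearized operator $\mathcal{L}(s,b)$ at the bifurcating solution $(u(s),v(s))=(\theta_a+s\Phi_c+O(s^2),\, s\Psi_c+O(s^2))$ and observe that
\[
\lim_{s\to 0^+}\mathcal{L}(s,b)=
\begin{pmatrix}
-\triangle-a+2\theta_a & 0\\[1mm]
0 & \displaystyle -\triangle-c_0+\frac{d\theta_a}{1+\alpha\theta_a}
\end{pmatrix}=:\mathcal{L}_0.
\]
Since $a>\lambda_1$ gives $\lambda_1(2\theta_a-a)>\lambda_1(\theta_a-a)=0$ and $c_0=\lambda_1(\frac{d\theta_a}{1+\alpha\theta_a})$, the operator $\mathcal{L}_0$ has $0$ as a simple first eigenvalue with eigenfunction $(0,\Psi_c)$, and all other eigenvalues have positive real part bounded away from $0$. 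By the standard perturbation theory for linear operators, for $0<s,b\ll 1$ there is a unique eigenvalue $\mu=\mu(s,b)$ of $\mathcal{L}(s,b)$ with $\mu\to 0$, while the remaining spectrum stays in the right half-plane, bounded away from the imaginary axis.

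Next I would compute $\lim_{s,b\to 0^+}\mu/s$. Let $(\xi,\eta)$ be the eigenfunction associated to $\mu$, normalized so that $(\xi,\eta)\to(0,\Psi_c)$ as $s,b\to 0^+$. Multiplying the second line of the linearized eigenvalue problem by $v$ and integrating over $\Omega$, and then multiplying the second equation of \eqref{eq:model} by $\eta$ and integrating by parts, I would subtract the two identities to eliminate the Laplacian term. The result, after dividing by $s^2$ and passing to the limit $s,b\to 0^+$ (using $v=s\Psi_c+O(s^2)$, $u\to\theta_a$, and the explicit form of $f(u,v)$), is
\[
\lim_{s,b\to 0^+}\frac{\mu}{s}=\int_\Omega\Psi_{c}^3\left(1-\frac{\beta d\theta_{a}}{1+\alpha\theta_{a}}\right)dx.
\]
The assumption that this integral is nonzero immediately gives $\mathrm{Re}\,\mu\neq 0$ for small $s,b$, proving non-degeneracy; the sign of $\mu$ agrees with the sign of the integral, yielding the stability/instability dichotomy.

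For the multiplicity part, I would argue by contradiction. If $c_1<0$, then along the branch \eqref{eq:c_1} we have $c=c_0+c_1 s+O(s^2)<c_0$ for $0<s\ll 1$, so in particular $\lambda_1<c<c_0=\lambda_1(\frac{d\theta_a}{1+\alpha\theta_a})$. Suppose \eqref{eq:model} had only one coexistence state $(\hat u,\hat v)$ for $0<s,b\ll 1$; by the first half of the theorem it must coincide with the non-degenerate bifurcating one, so $I-F'(\hat u,\hat v)$ is invertible and $F'(\hat u,\hat v)$ has no real eigenvalue exceeding one. Since $W_{(\hat u,\hat v)}=E=S_{(\hat u,\hat v)}$, Proposition \ref{prop:index} gives $\mathrm{index}_W(F,(\hat u,\hat v))=\pm 1$. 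Applying Lemma \ref{lem:index1}(i),(iii) and Lemma \ref{lem:index2}(ii) under the present parameter regime ($a>\lambda_1(\frac{b\theta_c}{1+\beta\theta_c})$ holds automatically for $b\ll 1$, while $c<\lambda_1(\frac{d\theta_a}{1+\alpha\theta_a})$), the additivity of the fixed point index yields
\[
1=\deg_W(I-F,D)=0+1+0+(\pm 1),
\]
a contradiction. Hence at least two coexistence states exist.

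The main obstacle I expect is verifying the roles of $a,c,b$ in Lemmas \ref{lem:index1} and \ref{lem:index2} applied at parameters $(\hat u,\hat v)$ near the bifurcation: one must check that for $b\ll 1$ we still have $a>\lambda_1(\frac{b\theta_c}{1+\beta\theta_c})$ (which holds since the right side tends to $\lambda_1<a$), ensuring $\mathrm{index}_W(F,(0,\theta_c))=0$, while the condition $c<c_0$ guarantees $\mathrm{index}_W(F,(\theta_a,0))=1$. The analytic step, deriving the limit identity for $\mu/s$ in the correct direction of the symmetry swap, requires care but is essentially a direct transcription of the argument for Theorem \ref{thm:nondeg}.
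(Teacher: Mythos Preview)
Your proposal is correct and mirrors exactly what the paper does: the paper proves Theorem~\ref{thm:nondeg} in detail and then states Theorem~\ref{thm:nondeg2} with only the remark ``Similarly, we have the following theorem\ldots'', so the intended proof is precisely the species-symmetry swap you carry out. Your identification of the limiting operator $\mathcal{L}_0$, the perturbation argument for the critical eigenvalue $\mu(s,b)$, the computation of $\lim_{s,b\to 0^+}\mu/s$ via the second equation, and the index-counting contradiction for multiplicity all transcribe correctly.

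One small wording issue: in the multiplicity part you write that non-degeneracy implies ``$F'(\hat u,\hat v)$ has no real eigenvalue exceeding one''. Non-degeneracy only tells you that $1$ is not an eigenvalue of $F'(\hat u,\hat v)$ (equivalently, $I-F'(\hat u,\hat v)$ is invertible); it does not by itself rule out eigenvalues larger than $1$. Fortunately your argument does not use that claim --- you correctly invoke Proposition~\ref{prop:index}(ii) only to conclude $\mathrm{index}_W(F,(\hat u,\hat v))=(-1)^\sigma=\pm 1$, and the contradiction $1=0+1+0\pm 1$ works either way. Just drop the unjustified clause. Your verification that $a>\lambda_1\bigl(\tfrac{b\theta_c}{1+\beta\theta_c}\bigr)$ for $b\ll 1$ (needed for Lemma~\ref{lem:index2}(ii)) is the right check, and is the symmetric counterpart of what the paper tacitly uses for $d\ll 1$ in the proof of Theorem~\ref{thm:nondeg}.
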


\section{Conclusions}

In this paper, we study a diffusive competition model \eqref{eq:model} with saturation, where the functional response is in the form
\[
	f(u,v)=\frac{1}{(1+\alpha u) (1+\beta v)}.
\]
The trivial and semi-trivial solutions include $(0,0)$, $(\theta_{a},0)$ and $(0,\theta_{c})$. Of course, the coexistence states with $u,v>0$ in $\Omega$ have more practical interest.

For arbitrarily fixed parameters $a,b,c,d,\alpha$ and $\beta$, the existence results of positive solutions of \eqref{eq:model} (Theorems \ref{thm:coexist} and \ref{thm:exist}) can be described in Figure \ref{fig:exist}.

For any $a,c>\lambda_1$, we can choose $\alpha$ or $\beta$ suitably large (Theorems \ref{thm:large:albe:li:sta} and \ref{thm:large:albe:multi}) such that \eqref{eq:model} has at least one coexistence state which is linear stable. When $a<\lambda_1(\frac{b\theta_{c}}{1+\beta\theta_{c}})$ and $\alpha\gg 1$, or $c<\lambda_1(\frac{d\theta_{a}}{1+\alpha\theta_{a}})$ and $\beta\gg 1$, there exist at least two coexistence states of \eqref{eq:model}. See Figure \ref{fig:mul1} and \ref{fig:mul2}.

\begin{figure}[htb]
	\centering
	\includegraphics[width=0.65\textwidth]{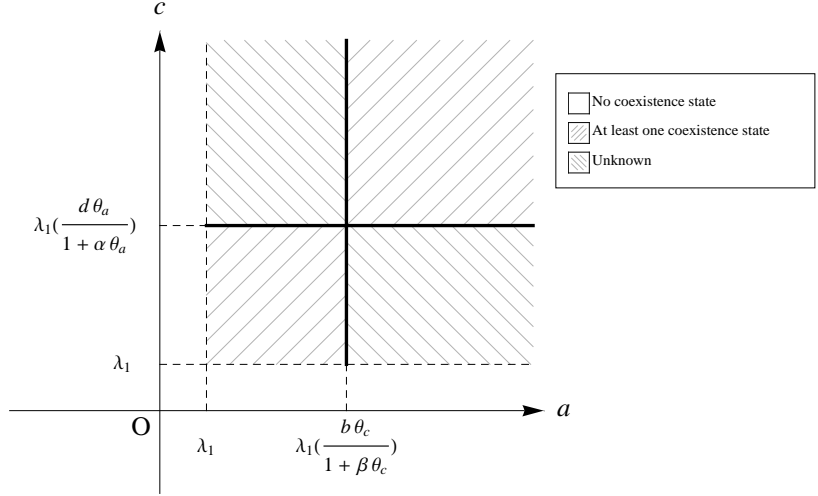}
	\caption{The existence of coexistence states and bifurcation lines}\label{fig:exist}
\end{figure}

\begin{figure}[htb]
	\centering
	\includegraphics[width=0.65\textwidth]{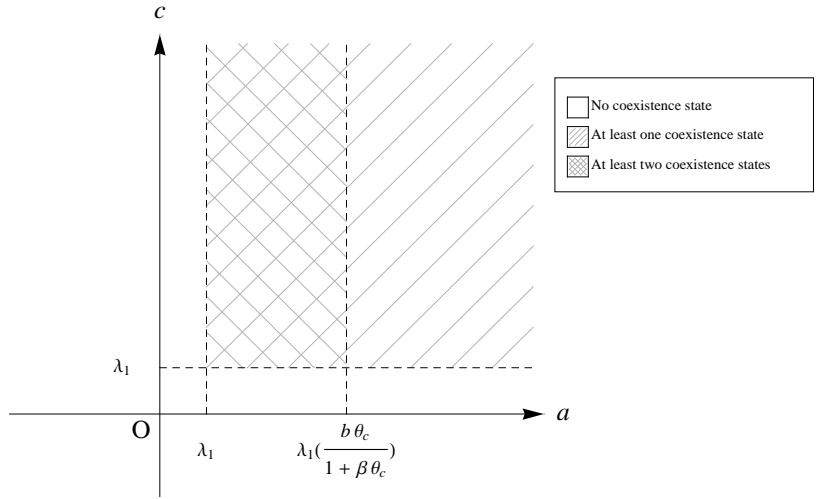}
	\caption{For $\alpha\gg 1$, the existence and multiplicity of coexistence states}\label{fig:mul1}
\end{figure}

\begin{figure}[htb]
	\centering
	\includegraphics[width=0.65\textwidth]{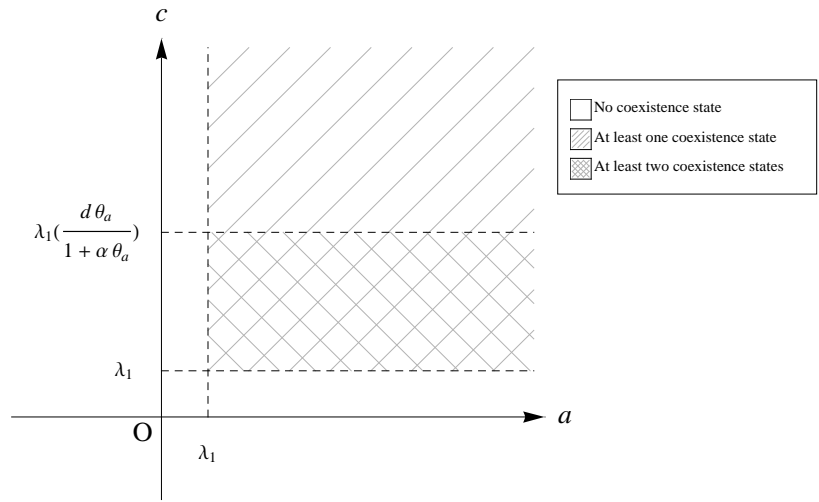}
	\caption{For $\beta\gg 1$, the existence and multiplicity of coexistence states}\label{fig:mul2}
\end{figure}

The bifurcation results are given in Section \ref{sec:bif}. The bifurcation occurs on the solid lines in Figure \ref{fig:exist}, where a coexistence state emanates from semi-trivial solutions $(\theta_{a},0)$ or $(0,\theta_{c})$ (Theorem \ref{thm:bif}). Finally, some stability and multiplicity results are presented in Theorems \ref{thm:nondeg} and \ref{thm:nondeg2}.

\FloatBarrier

\end{document}